\documentclass[11pt, reqno]{amsart}

\usepackage{color}
\usepackage{amsmath}
\usepackage{latexsym,amssymb}
\usepackage{graphicx}
\usepackage{stmaryrd}

\usepackage{mathptmx}
\usepackage{enumerate}

% THEOREM Environments ---------------------------------------------------
\newtheorem{thm}{ \bf Theorem}[section]

\newtheorem{lem}[thm]{ \bf Lemma}
\newtheorem{prop}[thm]{ \bf Proposition}

\newtheorem{rem}[thm]{ \bf Remark}

\numberwithin{equation}{section}

%--------------------------------------------------------------------------

\textwidth=6in

\textheight=8.5in

\parindent=16pt

\oddsidemargin=0.15in

\evensidemargin=0.15in

\topmargin=0.15in

\newcommand{\noi}{\noindent}

\newcommand{\R}{\mathbb{R}}
\newcommand{\N}{\mathbb{N}}

\newcommand{\la}{\lambda}
\newcommand{\sig}{\sigma}

\newcommand{\eps}{\varepsilon}

\newcommand{\del}{\delta}
\newcommand{\om}{\omega}

\newcommand{\Gam}{\mathnormal{\Gamma}}
\newcommand{\Del}{\mathnormal{\Delta}}

\newcommand{\Sig}{\mathnormal{\Sigma}}

\newcommand{\Ps}{\mathnormal{\Psi}}

\newcommand{\Om}{\mathnormal{\Omega}}

\newcommand{\C}{{\mathbb C}}
\newcommand{\D}{{\mathbb D}}

\newcommand{\Z}{{\mathbb Z}}

\newcommand{\PP}{{\mathbb P}}

\newcommand{\calS}{{\mathcal{S}}}

\newcommand{\be}{{\mathbf e}}

\newcommand{\SC}{\mathfrak{S}}

\DeclareMathOperator*{\argmax}{arg\,max}

\newcommand{\skp}{\vspace{\baselineskip}}

\newcommand{\diag}{{\rm diag}}

\newcommand{\w}{\wedge}

\newcommand{\To}{\Rightarrow}

\newcommand{\wh}{\widehat}

\newcommand{\iy}{\infty}

\newcommand{\IA}{\text{\it IA}}
\newcommand{\AT}{\text{\it AT}}
\newcommand{\RT}{\text{\it RT}}
\newcommand{\JT}{\text{\it JT}}
\newcommand{\WT}{\text{\it WT}}
\newcommand{\mumin}{\mu_{\text{min}}}

%---------------------------------------------------------------------------

\begin{document}

\baselineskip=17pt

\title[ ]
{An $\eps$-Nash equilibrium with high probability
for strategic customers in heavy traffic}
%\thanks{}

\author[Rami Atar]{Rami Atar}
\address{Department of Electrical Engineering\\
Technion -- Israel Institute of Technology\\
Haifa 32000, Israel}
\email{atar@ee.technion.ac.il}

\author[Subhamay Saha]{Subhamay Saha}
\address{Department of Electrical Engineering\\
Technion -- Israel Institute of Technology\\
Haifa 32000, Israel}
\email{subhamay@tx.technion.ac.il}

\thanks{Research supported in part by the ISF (Grant 1315/12)}

%---------------------------------------------------------------------------

\date{}

\begin{abstract}
A multiclass queue with many servers is considered, where customers
make a join-or-leave decision upon arrival based on queue length
information, without knowing the scheduling policy or the state of other queues.
A game theoretic formulation is proposed and analyzed, that takes advantage
of a phenomenon unique to heavy traffic regimes, namely Reiman's snaphshot principle,
by which waiting times are predicted with high precision by the information
available upon arrival. The payoff considered
is given as a {\it random variable}, which depends on the customer's decision,
accounting for waiting time in the queue and penalty for leaving.
The notion of an equilibrium is only meaningful in an asymptotic framework,
which is taken here to be the Halfin-Whitt heavy traffic regime.
The main result is the identification
of an $\eps$-Nash equilibrium with probability approaching 1.
On way to proving this result, new diffusion limit results
for systems with finite buffers are obtained.

\skp

\noi
{\bf AMS subject classifications:} 60F17, 60J60, 60K25, 91A06, 93E20

\skp

\noi
{\bf Keywords:} Halfin-Whitt heavy traffic regime;
Reiman's snapshot principle;
Strategic customers;
$\eps$-Nash equilibrium with high probability

\end{abstract}

\maketitle

\section{\textbf{Introduction}}\label{sec1}

Equilibrium behavior of strategic customers
in queueing systems has been the subject of great interest since the
work of Naor \cite{naor} (see the book by Hassin and Haviv \cite{hashav} for a survey),
and has been a particularly active research area in recent years.
As far as heavy traffic analysis is concerned,
not a great deal of attention has been drawn to game theoretic aspects
such as the asymptotic study of Nash equilibria, unlike, for example,
control theoretic treatment, to which much work has been devoted.
In this paper we propose and analyze a game theoretic formulation of strategic
customers in a multi-class queueing system,
that takes advantage of phenomena specific to heavy traffic regimes.
The formulation is based on associating with each customer a payoff
that reflects the customer's actual waiting time rather than its expectation.
The notion of equilibrium addressed, namely an $\eps$-Nash equilibrium with high probability (w.h.p.),
becomes meaningful only as scaling limits are taken.
An additional aspect that is unique to this setting regards the relatively
small level of information required for the players.
In game theoretic analysis of queueing models, it is usually the case that
when partial information of the system's state is available to the player,
the unobservable states are assumed to be in stationarity.
In the setting of this paper,
customers are aware of the queue length of their own class but not those of other classes,
and moreover, the scheduling policy is not known to them.
However, stationarity assumptions are not required.

The model considered consists of a fixed number of customer classes, that
differ in their service rates,
and $n$ identical, exponential servers that work in parallel.
Upon arrival of a class-$i$ customer, the $i$th queue length
is revealed and, based on this information,
he decides whether to join or leave. Accordingly, the customer's payoff is given by
$h_i(\WT)$ or $r_i$, respectively, where $h_i:\R_+\to\R_+$ is a given
function, $\WT$ is the time the particular customer will wait in line before
being admitted into service, and $r_i\in(0,\iy)$ is a
cost for not receiving service (both $h_i$ and $r_i$ depend on the class, $i$).
Because $\WT$ is a random variable the value of which is not known at the
time of decision, the payoff is in fact a {\it random} function of the customer's
decision, as well as other customers' decisions.
Establishing an equilibrium based on random payoffs is made possible
thanks to the consideration of the game in an asymptotic regime.

The asymptotic setting considered is the Halfin-Whitt (HW) heavy traffic regime \cite{hal-whi},
in which the number of servers, $n$, grows without bound, and the arrival processes
accelerate accordingly so as to keep the system critically loaded.
The customers considered are those that arrive during a fixed, finite time interval.
Thus the number of participating players also grows without bound.

The specific feature due to which a random payoff formulation is tractable
in this regime (and potentially in other heavy traffic regimes)
is {\it Reiman's snapshot principle} (RSP)
\cite{rei-snap}, which, when specialized to the present setting,
states that the waiting time a customer will experience
is asymptotically equal to the queue length at the time of arrival divided by the
overall rate at which customers from the class are served (see Section \ref{sec5}
for a precise statement). While
this principle has been proved in a number of settings, it does not always hold
(as explained in Remark \ref{rem1} below). In particular, its validity depends
on the scheduling policy. Our equilibrium results, that are based on this principle,
can therefore only be obtained under some assumptions on the scheduling.
We address this aspect by considering two families of scheduling policies under which,
as we show, RSP holds:
{\it fixed priority} (FP), where a server that becomes available will
always pick the customer at the head of the line of the buffer with least index
among non-empty buffers, and {\it serve the longest queue} (SLQ),
where the buffer with longest queue is picked.
Our main result shows that if all customers adopt a strategy that uses RSP
as a prediction for the waiting time, an $\eps$-Nash equilibrium w.h.p.\ is obtained.

On way to proving the main result we prove new diffusion limit results for the above
two policies, for systems in which customers join only when the queue length
of the corresponding buffer is below a threshold, an element
that can otherwise be described by finite buffers. A non-standard aspect
of the diffusion scale analysis required toward proving the main result
is that one must take into account different behaviors of customers, so as to allow for
scenarios where one of the customers deviates from the strategy that is to be shown
to lead to an equilibrium. In particular, properties on which the proof is based, such as
the $C$-tightness of some of the processes involved, are proved to hold uniformly
over such scenarios.

The only work the authors are aware of where a game theoretic equilibrium
is considered in conjunction with heavy traffic analysis of a queueing model
is by Gopalakrishnan et al.\ \cite{ward14}, that studies
{\it servers} that act strategically. Specifically, they
choose their service rate in order to optimize a tradeoff between
an effort cost and value of idleness. The focus of \cite{ward14} is on
the study of the implications of such strategic behavior on staffing
and routing, as the size of the system becomes large.
The notion of equilibrium is not of the type considered here, but is
based on deterministic, steady state payoffs, as well as on
complete state information. Moreover, the asymptotic analysis is provided {\it subsequently}
to establishing the prelimit equilibrium.

As far as our convergence results and RSP
are concerned, the closest work is by Gurvich and Whitt \cite{gur-whi},
where a parallel server system, with multiple classes as well as multiple server
pools, is considered in the HW regime, under the
{\it fixed queue and idleness ratio} policy. This policy aims at keeping queue lengths
as well as idleness levels at the different server pools at predetermined
fixed ratios. When specialized to the case of a single server pool,
and equal queue length ratios, this setting is similar to one of the two
settings studied in this paper, namely SLQ.
There are, however, two important differences in terms of the technical treatment.
First, as already mentioned,
the estimates required to deduce the main result must be uniform over scenarios.
A second difference is that finite buffers are not covered by \cite{gur-whi}.
Although it may seem that this aspect requires only simple
adaptations to cover convergence results,
this is not the case. In fact, diffusion limits do
not always exist under our assumptions, as is the case under SLQ if
the buffers are of equal size (this issue is developed further in \cite{ata-sah-2}).
Hence considerations beyond the infinite buffer model are necessarily significant here.

As an additional small sample of recent work on strategic behavior in queueing systems,
we mention Guo and Hassin \cite{guo}, that analyze the response of customers
to shutting down service
when the queue is empty, and resuming when the queue length exceeds a threshold; and
Manou et al.\ \cite{manou}, that studies a natural model for
the behavior of customers in a transportation station.
In both cases, Nash equilibria are determined under various assumptions
on the level of information.

\skp

We use the following notation. For $a,b\in\R$, the maximum [resp., minimum] is denoted by
$a\vee b$ [resp., $a\w b$], and $a^+=a\vee0$, $a^-=(-a)\vee0$.
For $x,y\in\R^k$ ($k$ a positive integer), $x\cdot y$
and $\|x\|$ denote the usual scalar product and $\ell_2$ norm, respectively.
Write $\{\be_i\}$, $i=1,\ldots,k$ for the standard basis in $\R^k$
and $1$ for $\sum_{i=1}^k\be_i$.
Denote $\R_+=[0,\iy)$. For $f:\R_+\to\R^k$, $\|f\|_T=\sup_{t\in[0,T]}\|f(t)\|$,
and, for $\theta>0$,
\[
w_T(f,\theta)=\sup_{0\le s<u\le s+\theta\le T}\|f_u-f_s\|.
\]
For a Polish space $\calS$, let $\C_\calS([0,T])$ and $\D_\calS([0,T])$
denote the set of continuous and, respectively, cadlag functions $[0,T]\to\calS$.
Write $\C_\calS$ and $\D_\calS$ for the case where $[0,T]$ is replaced by $\R_+$.
Endow $\D_\calS$ with the Skorohod $J_1$ topology. Write $X_n\To X$ for convergence in
distribution. A sequence of processes $X_n$ with sample paths in
$\D_\calS$ is said to be {\it $C$-tight} if it is tight and every subsequential
limit has, with probability 1, sample paths in $\C_\calS$.
For a sequence of processes $\xi^n$, $n\in\N$,
with sample paths in $\D_{\R^k}$, $C$-tightness is characterized (see VI.3.26 of \cite{jac-shi}) by
\begin{itemize}
\item[C1.] The sequence of random variables
$\|\xi^n\|_T$ is tight for every fixed $T<\iy$, and
\item[C2.] For every $T<\iy$,
$\eps>0$ and $\eta>0$ there exist $n_0$ and $\theta>0$ such that
\[
n\ge n_0 \text{ implies } \PP(w_T(\xi^n,\theta)>\eta)<\eps.
\]
\end{itemize}

For a positive integer $k$, $m\in\R^k$ and a symmetric, positive matrix $A\in\R^{k\times k}$,
{\it an $(m,A)$-Brownian motion} (BM) is a $k$-dimensional BM starting from zero,
having drift $m$ and infinitesimal covariance matrix $A$.

This paper is organized as follows. The model and the equilibrium result
appear in Section \ref{sec2}. Section \ref{sec3} and \ref{sec4} analyze the behavior
of the system under FP and SLQ, respectively,
and along the way also obtain diffusion limit results, that may be interesting by
their own right. Section \ref{sec5} addresses
RSP in these two settings, and proves the main result.

\section{\textbf{Model and Main Result}}\label{sec2}

We start by introducing the probabilistic model and the HW scaling.
Then we provide the game theoretic setting, and state the main result.

A sequence of queueing models is considered, indexed by $n \in \mathbb{N}$. The $n$th system has $N$ buffers and $n$ identical servers. Customers from $N$ distinct classes arrive at the system and, upon arrival, each customer is informed about the queue length at the buffer that corresponds to
its own customer class,
and, based on this information only, makes a decision whether to join or leave the system.
If a customer of class $i$ decides to join, he goes directly for service on the event that any of the servers is available, and otherwise he is queued in buffer $i$.
As far as the service policy is concerned, we consider FP and SLQ (that is, however, unknown to the customers). In the first case, the servers serve according to the rule given by $1>2>\cdots>N$. Thus, when a server becomes available, it admits into service a customer in the buffer with highest priority (that is,
least index) among all buffers that are non-empty at that instant.
Under SLQ, the buffer that currently has most customers receives highest priority
(where ties are broken arbitrarily).
At each buffer, the customers are always taken from the head of the line.
We assume the non-idling condition, that is, that
no server will idle as long as any customers are in the queue.

Let $(\Om,\mathcal{F}, \mathbb{P})$ be a probability space, on which all the random variables (r.v.s)
introduced below are to be defined. The arrivals in each class occur according to independent renewal processes. Let parameters $\lambda_i^n >0$, $i\in \{1,2,\ldots, N\}$, be given, representing the mean inter-arrival times of class-$i$ customers in the $n$th system. Let $\{\IA_i(l):l\in \mathbb{N}\}_i$ be independent sequences of strictly positive i.i.d.\ r.v.s with mean $1$ and variance $C^2_{\IA_i}$.
Let
\begin{align}
E^n_i(t)=\sup\Big\{l\geq 0:\sum_{k=1}^l\frac{\IA_i(k)}{\lambda^n_i}\leq t\Big\}, \qquad
t\geq 0\,.
\end{align}
Then $E^n_i$ counts the number of class-$i$ arrivals up to time $t$.
The parameters $\lambda_i^n$ satisfy
\begin{align}
\lambda_i^n=n\lambda_i+\sqrt{n}\hat{\lambda}_i+o(\sqrt{n})\,,
\end{align} where $\lambda_i>0$ and $\hat{\lambda_i}\in \mathbb{R}$ are fixed.
The service times of class-$i$ customers are assumed to be exponential with mean $\mu_i$.
The potential service processes, denoted by $\{S_i\}_{i=1,2,\ldots,N}$,
are thus assumed to comprise a collection of $N$ mutually independent
Poisson process, with rates $\mu_i$, $i=1,2,\ldots,N$, respectively.
They are assumed to have right-continuous sample paths.
While the arrival rates are accelerated with $n$, the individual service
rates are not. However, the capacity of the service pool grows due to the
increase of the number of servers, $n$. The resulting traffic intensity is thus
asymptotically given by $\sum_i\rho_i$, where $\rho_i=\la_i/\mu_i$.
We will assume the following critical load condition:
\begin{equation}\label{11}
\sum_i \rho_i=1.
\end{equation}

The initial conditions,
\[
Q^n(0)=(Q^n_1(0),Q^n_2(0),\ldots,Q^n_N(0)),\qquad
\Ps^n(0)=(\Ps^n_1(0),\Ps^n_2(0),\ldots,\Ps^n_N(0)),
\]
are $\Z_+^N$-valued r.v.s representing
the number of customers initially in the buffers and in service, respectively.
It is assumed that the initial configuration satisfies
$1\cdot Q^n(0)>0$ implies $1\cdot\Ps^n(0)=n$, reflecting the non-idling condition.

For each $n$, the three objects
\begin{equation}
  \label{10}
  \{E^n_i\}_i,\qquad \{S_i\}_i,\qquad (Q^n(0),\Ps^n(0))
\end{equation}
are assumed to be mutually independent.
The triplet \eqref{10} will be referred to as the {\it stochastic primitives}
of the model. All r.v.s introduced below, describing the system dynamics,
will be given as functions of the stochastic primitives
and of the collection of decisions taken by the strategic customers.

Thus, before describing the system dynamics, we introduce the notation
for the decision variables. The customers initially in the system do not participate in
the game formulation, and therefore in what follows, unless otherwise stated,
the term {\it customer} will refer to those customers that arrive after time zero.
A customer will be identified by a pair $(i,j)$, where $i\in\{1,2,\ldots,N\}$
is its class, and $j\in\N$ is its serial number in order of arrival.
The collection of decision variables $\del=\{\delta_{ij}: i\in\{1,2,\ldots,N\}, j\in \mathbb{N}\}$,
where $\del_{ij}\in\{0,1\}$, specifies the decision of each of the customers.
Having $\delta_{ij}=1$ [resp., $0$] specifies that the $j$th class-$i$ customer to arrive
decides to join [resp., leave] the system. Let
\begin{equation}\label{40}
J^n_i(t)=\sum_{j=1}^{E^n_i(t)}\delta_{ij},\qquad
R^n_i(t)=\sum_{j=1}^{E^n_i(t)}(1-\delta_{ij})
\end{equation}
denote counting processes for joining and reneging customers.
Let $Q^n_i(t)$ be the number of class-$i$ customers waiting at the $i$th buffer at time $t$,
and let $B^n_i(t)$ be the number of class-$i$ customers routed to the service pool
by that time. Then we have
\begin{align}\label{queuelength}
Q^n_i(t)=Q^n_i(0)+E^n_i(t)-B^n_i(t)-R^n_i(t)\,.
\end{align}
Let $\Ps^n_i(t)$ denote the number of class-$i$ customers in service at time $t$. Then
\begin{align}\label{19}
\Ps_i^n(t)=\Ps_i^n(0)+B^n_i(t)-D_i^n(t)\,,
\end{align}
where the departure process $D^n_i$ counts the number of completed services of class-$i$
jobs since time $0$ (including initial customers).
It is assumed that the departure process is given, in terms of the potential
service process, by
\begin{equation}\label{20}
D^n_i(t)=S_i\Big(\int_0^t\Ps^n_i(u)du\Big)\,.
\end{equation}
The non-idling condition is expressed by requiring
\begin{equation}
  \label{23}
  \text{for every $t$, } 1\cdot Q^n(t)>0 \text{ implies } 1\cdot\Ps^n(t)=n.
\end{equation}
Under the FP policy we have
\begin{equation}\label{24}
\int_{[0,\iy)}\sum_{k=1}^{i-1}Q^n_k(t)dB^n_i(t)=0,\qquad i=2,3,\ldots,N.
\end{equation}
And under SLQ, a server that becomes available at time $t$
chooses class $i_0$, where
$i_0\in\argmax_iQ^n_i$ (where ties are broken in an arbitrary, but concrete way), namely
\begin{equation}\label{24+}
\int_{[0,\iy)}1_{\{Q^n_i(t-)<\max_k Q^n_k(t-)\}}dB^n_i(t)=0,\qquad i=1,2,\ldots,N.
\end{equation}
The collection of equations \eqref{40}--\eqref{23} and either \eqref{24} or \eqref{24+},
along with the primitives and the decision variables $\del$, uniquely define the processes
$Q^n$, $X^n$, $\Ps^n$, $B^n$ and $D^n$ under each of the two policies.
Note that these processes are right-continuous by construction.

Now let
\begin{equation}\label{16}
\JT^n_i(t)=\inf\{s\ge t:J_i^n(s)>J_i^n(t-)\},
\end{equation}
(where, by convention, $\JT^n_i(0-)=0$),
represent the time of arrival of the first class-$i$ customer to join the system at or after time $t$.
Let also
\begin{equation}\label{17}
\RT^n_i(t)=\inf\{s>t:B^n_i(s)\geq B^n_i(\JT^n_i(t))+Q^n_i(\JT^n_i(t))\}\,.
\end{equation}
Then $\RT^n_i(t)$ gives the time when the customer joining at $\JT^n_i(t)$
enters service.
The time that particular customer waits in the queue is then given by
\begin{equation}\label{18}
\WT^n_i(t)=\RT^n_i(t)-\JT^n_i(t)\,.
\end{equation}
Note that, as a consequence,
\begin{align}\label{31}
Q^n_i(\JT^n_i(t))=B^n_i(\JT^n_i(t)+\WT^n_i(t))-B^n_i(\JT^n_i(t))\,.
\end{align}
(JT, RT, WT as well as AT defined below, are mnemonics for joining time, routing time, waiting
time and arrival time.)
We shall also need notation of arrival time and waiting time of the $j$th class-$i$
customer. These are obtained as follows:
\[
\AT^n_{ij}=\text{inv}E^n_i(j)=\inf\{t\ge0:E^n_i(t)\ge j\},
\]
\[
\WT^n_{ij}=\WT^n_i(\AT^n_{ij}).
\]
Note that while $\WT^n_{ij}$ is well-defined for all $(i,j)$, it only gives
the waiting time for those customers $(i,j)$ that have actually joined
the system; this concept is indeed meaningless for the reneging customers.
Scaled versions of the main stochastic processes introduced above
are defined as follows:
\begin{align}
\label{48}
\hat{Q}^{n}_i(t) &= \frac{Q^{n}_i(t)}{\sqrt{n}}\,\,,\qquad\hat{B}^{n}_i(t)=  \frac{B^{n}_i(t)-n\lambda_i t}{\sqrt{n}}\,,\\
\notag
\hat{R}^{n}_i(t) &= \frac{R^{n}_i(t)}{\sqrt{n}}\,\,,\qquad\hat{S}^{n}_i(t)= \frac{S_i(nt)-n\mu_i t}{\sqrt{n}}\,,
\qquad\hat{D}^n_i(t)=\hat{S}^{n}_i\Big(\int_0^t\bar{\Ps}^n_i(u)du\Big)\,,
\\
\notag
\hat{E}^{n}_i(t) &= \frac{E^{n}_i(t)-\lambda^n_i t}{\sqrt{n}}\,\,,\qquad\hat{\Ps}^{n}_i(t)= \frac{\Ps^{n}_i(t)-\rho_i n}{\sqrt{n}}\,.
\end{align}
Also define,
\begin{equation}\label{15}
\wh\WT^n_i(t)=\sqrt n\WT^n_i(t),\qquad \wh{\WT}^n_{ij}=\sqrt n \WT^n_{ij}.
\end{equation}
It is assumed that the scaled initial condition converges in distribution:
\begin{equation}\label{22}
(\hat Q^n(0),\hat\Ps^n(0))\To (0,\Ps(0)),
\end{equation}
where $\Ps(0)$ is an $\R^N$-valued r.v.\ with $\sum_i\Ps_i(0)\le0$.

This completes the description of the stochastic processes of interest.
We denote the collection of processes, that we will sometimes refer to as {\it dynamics}, by
\[
\calS^n=\calS^n[\del]=(J^n,R^n,Q^n,B^n,\Ps^n,D^n,\JT^n,\RT^n,\WT^n),
\]
where we emphasize the dependence of these processes on the decision
variables $\del$. We will use similar notation to emphasize the dependence of
each of the components of $\calS^n$ on $\del$, as for example $Q^n[\del]$.

Now we come to the game-theoretic setting. It is described for fixed $n$.
In the game, the
dynamics described above will serve as the game's state. The game is played by the customers
to arrive up to time $\bar{T}$, where $\bar{T}\in(0,\iy)$ is fixed throughout.
A decision is made by each customer once the queue length of the corresponding class
at the time of arrival is revealed to it.
Thus for our purpose, a {\it strategy} is a mapping $\sig:\Z_+\to\{0,1\}$.
We denote the set of all such mappings by $\Sig$.
A {\it strategy profile} is an element of $\bar\Sig:=\Sig^{\{1,2,\ldots,N\}\times\N}$.
Let a strategy profile $\sig=\{\sig_{ij}\}\in\bar\Sig$ be given.
We say that {\it the game is played with the strategy profile $\sig$}
if one has
\begin{equation}\label{13}
\begin{cases}
  &\calS^n=\calS^n[\Del^n], \text{ (specifically, $Q^n=Q^n[\Del^n]$)}, \\
  &\Del^n_{i,j}=\sig_{ij}(Q^n_i(\AT^n_{ij}-)),\qquad i\in\{1,2,\ldots,N\},\, j\in\N.
\end{cases}
\end{equation}
Thus $\calS^n$ is the dynamics resulting from having each customer $(i,j)$
adopt the strategy $\sig_{ij}$, and
$\Del^n_i(j)$ is a r.v.\ representing the action taken by customer $(i,j)$
in that situation.
An argument by induction on the times of arrival shows that the system of equations
\eqref{13} has a unique solution, and thus $\calS^n$ and $\Del^n$ are well-defined
r.v.s. We will also need a notation for the dynamics $\calS^n$, thus determined
by \eqref{13}, as a function of the strategy profile $\sig$. We write it as $\calS^n(\sig)$.

We formulate the payoff for customer $(i,j)$ by accounting for a cost associated
with not receiving service (in case of reneging) and a function of the waiting
time (in case of joining). To this end, we are given constants $r_i>0$, $i\in\{1,2,\ldots,N\}$
and functions $h_i:\R_+\to\R_+$, assumed to be continuous, strictly increasing
and to vanish at zero.
For a strategy profile $\sig=\{\sig_{ij}\}$, denote
$\sig^{ij}=\{\sig_{k,l}:(k,l)\ne(i,j)\}$.
The payoff for customer $(i,j)$, when the strategy profile $\sig$ is played, is given by
\begin{align}\label{14}
C^{n}_{ij}(\sig_{ij},\sig^{ij})=\begin{cases}
r_i, & \Del_i^{n}(j)=0,\, \AT^n_{ij}\le \bar{T},\\
h_i(\wh{\WT}^{n}_{ij}), & \Del_i^{n}(j)=1,\,\AT^n_{ij}\le \bar{T},\\
0, & \AT^n_{ij}>\bar T.
\end{cases}
\end{align}
Thus, according to the payoff definition, the game
neglects all customers arriving after time $\bar T$.

For fixed $n$ and $\eps>0$, and an event $\hat\Om\in\mathcal{F}$,
a strategy profile $\sig=\{\sig_{ij}\}$ is said to be an
{\it $\eps$-Nash equilibrium on the event $\hat\Om$} if
\begin{equation}\label{21}
\forall (i,j),\quad \forall\tau\in\Sig,\quad
C^n_{ij}(\sig_{ij},\sig^{ij})\le C^n_{ij}(\tau,\sig^{ij})+\eps
\end{equation}
holds on $\hat\Om$.
A sequence of strategy profiles $\{\sig^n\}_{n\in\N}$ is said to be
an {\it $\eps$-Nash equilibrium w.h.p.}, if
there exist events $\hat{\Om}^n$, $n\in\N$, such that, for every $n$,
$\sig^n$ is an $\eps$-Nash equilibrium on $\hat{\Om}^n$, and $\PP(\hat{\Om}^n)\to1$
as $n\to\iy$.

For each $n$ and $(i,j)$, consider the strategy
\begin{equation}\label{12}
\sig^n_{ij}(q)=\begin{cases}
  1, & \text{if } h_i\Big(\frac{q}{\sqrt n \la_i}\Big) \le r_i,\\
  0, & \text{otherwise,}
\end{cases}
\qquad q\in\Z_+.
\end{equation}

\begin{thm}\label{th1}
For any $\eps>0$, under each of the two scheduling policies defined above,
the sequence of strategy profiles $\{\sig^n\}$ defined in \eqref{12} is an
$\eps$-Nash equilibrium w.h.p.
\end{thm}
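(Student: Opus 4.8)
\smallskip\noindent\textbf{Proof plan.}
The strategy $\sig^n_{ij}$ of \eqref{12} is precisely the myopic best response of customer $(i,j)$ to the waiting-time forecast furnished by Reiman's snapshot principle (RSP): on seeing queue length $q$, the customer anticipates a scaled wait of about $q/(\sqrt n\la_i)$ (the queue length divided by the asymptotic class-$i$ service rate $n\la_i$, rescaled by $\sqrt n$) and joins exactly when the resulting payoff $h_i(q/(\sqrt n\la_i))$ does not exceed $r_i$. Since $h_i$ is continuous, strictly increasing and vanishes at $0$, put $x_i^\ast:=h_i^{-1}(r_i)\in(0,\iy]$, where $x_i^\ast=\iy$ exactly when $\sup h_i\le r_i$ (in which case $\sig^n_{ij}\equiv1$ and the second of the two cases below is vacuous). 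Then $\sig^n_{ij}(q)=1$ iff $q\le\sqrt n\la_i x_i^\ast$, so under $\sig^n$ the $i$th buffer behaves like a finite buffer of size $\lfloor\sqrt n\la_i x_i^\ast\rfloor$; in particular, in the dynamics $\calS^n(\sig^n)$ one has $Q^n_i(t)\le\sqrt n\la_i x_i^\ast+1$ for all $t$, \emph{deterministically}. The first block of work (Sections \ref{sec3}, \ref{sec4}) is to establish, under both FP and SLQ, the diffusion-scale behaviour of such finite-buffer systems: the fluid limit $n^{-1}B^n_i(t)\to\la_i t$ uniformly on compacts and the $C$-tightness of $\hat Q^n$ and $\hat R^n$, hence, through \eqref{queuelength}, \eqref{48}, the renewal FCLT for $\hat E^n$, and \eqref{22}, of $\hat B^n$. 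Combining the deterministic buffer bound with the fluid limit of $B^n_i$ then yields $\sup_{t\le\bar T}\WT^n_i(t)=O(n^{-1/2})$, so $\|\wh\WT^n_i\|_{\bar T}$ is tight.

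The second ingredient (Section \ref{sec5}) is RSP in a quantitative, uniform-in-customers form. Applying \eqref{31} at the arrival epoch of a customer $(i,j)$ that joins (so that $\JT^n_i(\AT^n_{ij})=\AT^n_{ij}$) gives $Q^n_i(\AT^n_{ij})=B^n_i(\AT^n_{ij}+\WT^n_{ij})-B^n_i(\AT^n_{ij})$; writing $B^n_i(s)=n\la_i s+\sqrt n\,\hat B^n_i(s)$ and using that $\WT^n_{ij}=O(n^{-1/2})$ while $\hat B^n_i$ is $C$-tight (so, by C2, its oscillation over windows of length $O(n^{-1/2})$ tends to $0$ in probability), one obtains $Q^n_i(\AT^n_{ij})=n\la_i\WT^n_{ij}+o(\sqrt n)$, whence
\[
\max_{(i,j):\,\AT^n_{ij}\le\bar T}\Big|\wh\WT^n_{ij}-\frac{\hat Q^n_i(\AT^n_{ij})}{\la_i}\Big|\To 0;
\]
since $Q^n_i(\AT^n_{ij})$ and $Q^n_i(\AT^n_{ij}-)$ differ by at most $1$, the same holds with $\hat Q^n_i(\AT^n_{ij}-)$ in place of $\hat Q^n_i(\AT^n_{ij})$. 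The non-standard point, flagged in the introduction, is that this statement (and the tightness estimates behind it) must hold not only for $\calS^n(\sig^n)$ but simultaneously for every \emph{one-player deviation} dynamics --- one in which a single customer $(i,j)$ is forced to join while everyone else plays $\sig^n$. Such a perturbation shifts the state by $O(1)$ in one coordinate and leaves the scaling limit unchanged; nonetheless one must check that the estimates in Sections \ref{sec3}--\ref{sec5} (those comprising C1--C2 and the fluid-limit rates) depend only on the model parameters and not on which customer is singled out --- equivalently, that each deviation dynamics can be coupled with $\calS^n(\sig^n)$ so that the two differ by $O(1)$ customers. Either way one obtains a single event of probability $\to 1$ on which the displayed RSP bound holds, with arbitrarily small prescribed slack, for $\calS^n(\sig^n)$ and for all $O(n)$ relevant one-player deviations at once.

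Granting these, the equilibrium property is verified customer by customer. Fix $\eps>0$, a customer $(i,j)$, and $\tau\in\Sig$. If $\AT^n_{ij}>\bar T$ both sides of \eqref{21} vanish, so assume $\AT^n_{ij}\le\bar T$ and set $q:=Q^n_i(\AT^n_{ij}-)$; this is determined by the dynamics strictly before $\AT^n_{ij}$ and is therefore the same whether $(i,j)$ plays $\sig^n_{ij}$ or $\tau$. If $\tau(q)=\sig^n_{ij}(q)$, both profiles induce the same realized decisions and hence the same dynamics, so \eqref{21} holds with slack $0$. Otherwise there are two cases. \emph{Case (i): $\sig^n_{ij}(q)=1$ and $\tau(q)=0$.} Then $q\le\sqrt n\la_i x_i^\ast$ and $h_i(q/(\sqrt n\la_i))\le r_i$; in $\calS^n(\sig^n)$ customer $(i,j)$ joins, so RSP together with $\hat Q^n_i(\AT^n_{ij})=q/\sqrt n+O(n^{-1/2})$ gives $\wh\WT^n_{ij}=q/(\sqrt n\la_i)+o(1)$, and since $q/(\sqrt n\la_i)$ stays in a fixed compact set (by the $C$-tightness of $\hat Q^n$ and the buffer bound) on which $h_i$ is uniformly continuous, on the good event
\[
C^n_{ij}(\sig^n_{ij},(\sig^n)^{ij})=h_i(\wh\WT^n_{ij})\le h_i\!\big(q/(\sqrt n\la_i)\big)+\eps\le r_i+\eps=C^n_{ij}(\tau,(\sig^n)^{ij})+\eps .
\]
\emph{Case (ii): $\sig^n_{ij}(q)=0$ and $\tau(q)=1$.} Then $\sqrt n\la_i x_i^\ast<q\le\sqrt n\la_i x_i^\ast+1$ (the lower bound because $\sig^n_{ij}(q)=0$, the upper from the buffer bound in $\calS^n(\sig^n)$; this case can occur only when $x_i^\ast<\iy$), so $q/(\sqrt n\la_i)\to x_i^\ast$; in the deviation dynamics customer $(i,j)$ joins, so RSP gives $\wh\WT^n_{ij}\ge q/(\sqrt n\la_i)-o(1)>x_i^\ast-o(1)$, and by uniform continuity of $h_i$ near $x_i^\ast$, on the good event
\[
C^n_{ij}(\tau,(\sig^n)^{ij})=h_i(\wh\WT^n_{ij})\ge h_i(x_i^\ast)-\eps=r_i-\eps,
\]
i.e.\ $C^n_{ij}(\sig^n_{ij},(\sig^n)^{ij})=r_i\le C^n_{ij}(\tau,(\sig^n)^{ij})+\eps$. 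In every case \eqref{21} holds on the good event, simultaneously over all $(i,j)$ and all $\tau$, which is the conclusion of Theorem \ref{th1}.

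The substantive difficulty is concentrated in the first two steps: the diffusion-limit and $C$-tightness results for the finite-buffer FP and SLQ systems are delicate --- and, as the introduction notes, under SLQ a diffusion limit may fail to exist for certain buffer-size configurations, so the finite-buffer feature is genuinely new rather than a routine extension of \cite{gur-whi} --- and these estimates, along with RSP, must be pushed through uniformly over the $\Theta(n)$ one-player deviations. The concluding case analysis, by contrast, is elementary once RSP, the deterministic buffer bound, and the uniform continuity of the $h_i$ on the relevant compacts are in hand.
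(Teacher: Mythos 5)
Your plan follows the paper's own route essentially step for step: the reduction to single-player deviations of $\sig^n$, the deterministic buffer bound \eqref{36}, uniform-over-deviations $C$-tightness of $\hat Q^{n,s},\hat R^{n,s},\hat B^{n,s}$ for FP and SLQ, the quantitative uniform RSP via \eqref{31}/\eqref{47} together with $\WT^{n,s}=O(n^{-1/2})$ and the modulus of continuity of $\hat B^{n,s}$, and finally the same two-case comparison of $h_i(\wh\WT^n_{ij})$ with $r_i$ using continuity of $h_i$ on the compact set determined by the buffer bound (the paper phrases this contrapositively, bounding the RSP error $\gamma^n$ from below when \eqref{21} fails, but the content is identical). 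The one caveat is your parenthetical ``equivalently, couple each deviation with $\calS^n(\sig^n)$ so the two differ by $O(1)$ customers'': the paper's second remark in Section \ref{sec2} explains that a single deviation can have a non-negligible cascading effect, so such a coupling is not available and is not equivalent to the uniform-estimate route you (correctly) name as the primary one and which the paper actually carries out scenario by scenario.
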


Toward proving this result, we analyze the diffusion scale processes, and, along the way
also obtain diffusion limit results. These are Proposition \ref{prop1}, for FP,
and Proposition \ref{prop2}, for SLQ.

\begin{rem}\label{rem1}
{\it RSP does not always hold.}
\rm
One of the main issues we address is the validity of RSP under
the scheduling policies considered. In order to prove the main result, this principle
needs to hold in a strong form, namely that,
w.h.p., {\it every} customer arriving, and joining, in the given time interval $[0,\bar T]$,
experiences a delay given, with high precision,
by the ratio between queue length and arrival rate.
It should be noted that this property is not valid for arbitrary scheduling. For example,
consider a scheduling that prioritizes class 1 over class 2 up to a certain
fixed time, $t_0$, and then switches to the a priority of 2 over 1.
The standard prediction is that the diffusion scale waiting time for a class-$i$
customer is approximately given by
$\wh\WT\approx \la_i^{-1}\hat Q_i=(\rho_i\mu_i)^{-1}\hat Q_i$, where $\hat Q_i$ is the
diffusion scale queue length at the arrival time.
Now, consider a class-2 customer present in the buffer at time $t_0$.
Such a customer will be sent to service
approximately $(\rho_1\mu_1+\rho_2\mu_2)^{-1}\hat q$ units of time after $t_0$, where
$\hat q=n^{-1/2}q$, and $q$ is its position in line
at $t_0$, because when 2 has priority,
every server in the pool to become available will pick a customer from buffer 2.
Hence, w.h.p., most customers that are in buffer 2 at time $t_0$, that are, in fact,
$O(\sqrt n)$ in number, will experience a delay significantly
different than that predicted by RSP.
This number increases even further under a policy that
switches priority many times during the time interval
in question. While these policies may not be particularly interesting by their own right,
this discussion shows that there is content
in the assertion that the principle does hold for the policies of interest.
\end{rem}

\begin{rem}
{\it Individual decisions may have long term effect.}
\rm
The analysis must take into account the possible behavior of customers that do not follow the proposed
rule. At the technical level, the estimates that lead to existence of diffusion limits are
dealt with for different behaviors of customers.
It may seem that it is enough to consider the behavior of the system
when all customers follow the proposed rule, and then argue that the behavior of a single
customer will have a negligible effect.
It should be noted, however, that
the decision of one customer may affect significantly the waiting time of other customers.
As a simple example for that, consider a two-class system under FP,
where, at a certain time, a high priority customer arrives to find an empty buffer
of its own class. If he decides to leave, and for a little while there are no new arrivals,
then the first-in-line customer at the low priority class will get served as soon as
a server becomes available. If he joins, it is possible that a large number of high priority
customers will join soon after, so that the waiting time of the low priority customer referred
to above will delay considerably. Hence a single player's decision may have a significant effect
on other players.
\end{rem}

\section{\textbf{Fixed Priority}}\label{sec3}

This section is devoted to a convergence result in the case where the servers implement
the FP scheduling. It provides the main estimates that determine the limiting behavior of the fluid and diffusion scaled processes, that are later used to prove RSP.

Throughout, $\sig^n=\{\sig^n_{ij}\}$ denotes the strategy profile \eqref{12}.
Given $(i,j)$, denote by $\bar{\sigma}^n_{ij}\in\Sig$ the strategy
$\bar{\sigma}^n_{ij}=1-\sigma^n_{ij}$, that acts precisely as the negation of $\sigma^n_{ij}$.
We begin by noting that in order to show that $\sig^n$ is an $\eps$-Nash equilibrium w.h.p.,
it suffices to consider \eqref{21} with $\tau=\bar\sig^n_{ij}$ only.
Indeed, given $(i,j)$ and $\tau\in\Sig$,
define $A=\{q\in\Z_+:\tau(q)\neq \sigma_{ij}^n(q)\}$. Then we have
\begin{align*}
C^n_{ij}(\tau,\sig^{n,ij})=
\begin{cases}
C^n_{ij}(\bar{\sigma}^n_{ij},\sig^{n,ij}), &\mbox{if} \,\,Q^n_i(\AT^n_{ij}-)\in A,
\\
C^n_{ij}(\sigma^n_{ij},\sig^{n,ij}), & \mbox{if} \,\,Q^n_i(\AT^n_{ij}-)\in A^c,
\end{cases}
\end{align*}
and so the validity of \eqref{21} for $\tau=\bar\sig^n_{ij}$ and $\tau=\sig^n_{ij}$
(the latter being trivial) implies the validity of this inequality for $\tau\in\Sig$.

We will use the term {\it scenario} for the collection of processes obtained under any one of
the strategy profiles $(\bar\sig^n_{ij},\sig^{n,ij})$. More precisely,
let us fix $n$. Recall that, for $\sig\in\bar\Sig$,
$\calS^n(\sig)$ denotes the dynamics obtained when a strategy profile $\sig$ is played.
Let
$$
\SC=\{(i,j):i\in \{1,2,\ldots,N\},\,\,j \in \mathbb{N}\}\,.
$$
For $s=(i,j)\in \SC$, the {\it scenario} $s$ is defined to be
$\calS^n(\bar\sig^n_{ij},\sig^{n,ij})$,
namely the dynamics corresponding to player $(i,j)$ playing $\bar\sig^n_{ij}$ and all
other players $(k,l)$ playing $\sig^n_{kl}$. In addition, {\it scenario $0$},
that we will also call the {\it reference} scenario, is defined as $\calS^n(\sig^n)$.
Scenarios are thus indexed by the set $\SC_0:=\SC\cup\{0\}$.
As we have just argued, the main result will follow once we show that there exist
events $\hat{\Om}^n$ such that, for every $n$, on $\hat{\Om}^n$,
\begin{align}\label{30}
\forall (i,j)\quad
C^n_{ij}(\sig^n_{ij},\sig^{n,ij})\le C^n_{ij}(\bar{\sigma}^n_{ij},\sig^{n,ij})+\eps,
\end{align}
and $\PP(\hat{\Om}^n)\to1$ as $n\to\iy$.
We thus work in what follows with scenarios. In order to address all scenarios
simultaneously, the dependence of the processes on the scenario has to be reflected in the notation.
For each of the processes introduced above, except for the stochastic primitives and their scaled
versions, an additional superscript $s$ will indicate
that the process is considered under scenario $s\in\SC_0$.
Thus, for example, $Q^{n,s}=Q^n(\bar\sig^n_{ij},\sig^{n,ij})$ if $s=(i,j)$, and
$Q^{n,s}=Q^n(\sig^n)$ if $s=0$.

Throughout what follows, we adopt the convention that $e^{n,s}(t)$
(or sometimes $e^{n,s}_i(t)$), $t\in[0,T]$, denotes a generic family of processes, indexed by
$n\in\N$ and $s\in\SC_0$,
that can change from one appearance to another, and has the property
\begin{equation}\label{44}
\sup_s\| e^{n,s}\|_T\to0 \quad\text{in probability, as $n\to\iy$.}
\end{equation}

The balance equations \eqref{queuelength}, \eqref{19} and \eqref{20}
have the following form when translated to the diffusion scale, namely
\begin{align}\label{41}
&\hat{Q}^{n,s}_i(t)= \hat{Q}^n_i(0)+\hat{E}^{n}_i(t)-\hat{B}^{n,s}_i(t)-\hat{R}^{n,s}_i(t)
+n^{-1/2}(\lambda^n_i-n\lambda_i)t\,,\\ \label{42}
&\hat{\Ps}^{n,s}_i(t)=\hat{\Ps}^n_i(0)+\hat{B}^{n,s}_i(t)
-\hat{S}^{n}_i\Big(\int_0^t\bar{\Ps}^{n,s}_i(u)du\Big)-\mu_i\int_0^t\hat{\Ps}^{n,s}_i(u)du\,.
\end{align}
Let $X^{n,s}_i=Q^{n,s}_i+\Ps^{n,s}_i$ represent the total number of class-$i$ customers
in the system, and let its scaled version be defined by
\begin{align}\label{43}
\hat{X}^{n,s}_i(t)=\frac{X^{n,s}_i(t)-\rho_i n}{\sqrt n}
=\hat{Q}^{n,s}_i(t)+\hat{\Ps}^{n,s}_i(t)\,.
\end{align}
Then by the assumptions on the initial conditions we have
$$
\hat{X}^n(0)\rightarrow \Ps(0)=: X_0\,.
$$
Our first estimate addresses the scaled queue lengths of the high priority classes.

\begin{lem}\label{implem}For $i=1,2,\ldots, N-1$ and for any $T < \infty$ we have
$$
\displaystyle\sup_s\|\hat{Q}^{n,s}_i\|_T\rightarrow 0,
\quad\text{in probability.}
$$
\end{lem}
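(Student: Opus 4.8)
\emph{Sketch of the argument.} Since $Q^{n,s}_i(t)\le Y^{n,s}(t):=\sum_{k=1}^{N-1}Q^{n,s}_k(t)$ for every $i\le N-1$, it suffices to prove $\sup_s\|Y^{n,s}\|_T\to0$ in probability, and I will in fact obtain $\sup_s\|Y^{n,s}\|_T=O_P(\log n)$. As a first reduction, scenario $s=(i_0,j_0)$ differs from the reference scenario $0$ only through customer $(i_0,j_0)$, who acts at time $\AT^n_{i_0 j_0}$; hence on $\{\AT^n_{i_0 j_0}>T\}$ one has $Y^{n,s}\equiv Y^{n,0}$ on $[0,T]$, and on an event of probability tending to $1$ there are at most $Kn$ scenarios with $\AT^n_{i_0 j_0}\le T$. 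All estimates below are arranged to survive a union bound over these $O(n)$ scenarios, which is what forces the use of exponential (Cram\'er/Bernstein-type) tail bounds in place of soft tightness arguments.

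Next I would introduce, for each $s$, the stopping time $\tau_n^s:=\inf\{t:Y^{n,s}(t)\ge\sqrt n\,q_\ast\}$ with $q_\ast:=\min_k\lambda_k h_k^{-1}(r_k)$, chosen so that by \eqref{12} no class-$k$ customer following $\sig^n$ reneges while $Q^{n,s}_k<\sqrt n\,q_\ast$; this breaks the circular dependence between smallness of $Y^{n,s}$ and absence of reneging in classes $1,\dots,N-1$. On $[0,\tau_n^s\wedge T]$ the queues of classes $1,\dots,N-1$ stay below their reneging thresholds, so adding \eqref{41} and \eqref{42} (which cancels $\hat B^{n,s}_i$ and decouples class $i$ from the others), substituting $\hat\Ps^{n,s}_i=\hat X^{n,s}_i-\hat Q^{n,s}_i$, and applying a Gronwall estimate gives, for $i\le N-1$,
\[
\|\hat X^{n,s}_i\|_{T\wedge\tau_n^s}\ \le\ |\hat X^n_i(0)|\ +\ C\big(\|\hat E^n_i\|_T+\|\hat S^n_i\|_T+1\big)\ +\ C\|\hat Q^{n,s}_i\|_{T\wedge\tau_n^s},
\]
whose last term is at most $Cq_\ast$ by the definition of $\tau_n^s$. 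The remaining terms are scenario-independent and tight, so $\sup_s\max_{i<N}\|\hat X^{n,s}_i\|_{T\wedge\tau_n^s}=O_P(1)$, with no union bound needed here; fix $M$ with $\PP(G^n_M)\to1$, where $G^n_M:=\{\sup_s\max_{i<N}\|\hat X^{n,s}_i\|_{T\wedge\tau_n^s}\le M\}$.

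The heart of the argument is a drift estimate. On $G^n_M$, for $t<\tau_n^s\wedge T$ with $Y^{n,s}(t)>0$ one has $1\cdot\Ps^{n,s}(t)=n$ by non-idling, $|X^{n,s}_k(t)-\rho_k n|\le M\sqrt n$ for $k<N$, and $\Ps^{n,s}_k=X^{n,s}_k-Q^{n,s}_k$, from which
\[
\sum_k\mu_k\Ps^{n,s}_k(t)\ \ge\ \sum_{k<N}\lambda_k n\ +\ \mu_{\min}\rho_N\, n\ -\ CM\sqrt n\ -\ \mu_{\max}Y^{n,s}(t),
\]
the term $\mu_{\min}\rho_N n$ coming from $\mu_N\Ps^{n,s}_N\ge\mu_{\min}\big(n-\sum_{k<N}X^{n,s}_k\big)$. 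On the other hand class-$\{1,\dots,N-1\}$ customers arrive at total intensity $\le\sum_{k<N}\lambda^n_k=\sum_{k<N}\lambda_k n+O(\sqrt n)$. Under FP every service completion occurring while $Y^{n,s}>0$ routes a class-$(<N)$ customer out of a buffer (the freed server picks the highest-priority nonempty buffer, of index $<N$), so
\[
Y^{n,s}(t)\ =\ Y^{n}(0)\ +\ \til J^{n,s}(t)\ -\ \int_0^t\ONE_{\{Y^{n,s}(u-)>0\}}\,dD^{n,s}(u),
\]
with $\til J^{n,s}$ counting the class-$(<N)$ customers that join and queue (so $\til J^{n,s}(t_2)-\til J^{n,s}(t_1)\le\sum_{k<N}(E^n_k(t_2)-E^n_k(t_1))$) and $D^{n,s}=\sum_k D^{n,s}_k$. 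Comparing the two intensities and absorbing $CM\sqrt n$ and $\mu_{\max}Y^{n,s}\le\mu_{\max}q_\ast\sqrt n$ into $\tfrac12\mu_{\min}\rho_N n$ for $n$ large --- legitimate precisely because $\rho_N>0$, which is where $i\le N-1$ enters --- one finds that on $G^n_M$ and before $\tau_n^s\wedge T$ the process $Y^{n,s}$ has upward intensity at most $\sum_{k<N}\lambda_k n+O(\sqrt n)$ and downward intensity at least $\sum_{k<N}\lambda_k n+\tfrac12\mu_{\min}\rho_N n$. Hence each excursion of $Y^{n,s}$ away from $0$ is dominated jump-for-jump by a random walk reflected at $0$ with up/down step-ratio some $r<1$ bounded away from $1$ uniformly in $n$. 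Since $Y^{n,s}$ makes $O_P(n)$ jumps on $[0,T]$ and $Y^n(0)=o_P(\sqrt n)$, a union bound over these jumps and over the $O(n)$ scenarios shows that with probability tending to $1$ every excursion of every $Y^{n,s}$ before $\tau_n^s\wedge T$ has height at most $C'\log n$; in particular $\tau_n^s>T$ for all relevant $s$, and therefore $\sup_s\|Y^{n,s}\|_T=O_P(\log n)$, giving the claim after dividing by $\sqrt n$.

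The main obstacle is the combination of a \emph{sharp} bound --- $O(\log n)$, hence $o(\sqrt n)$, rather than the $O(\sqrt n)$ fluctuation that a maximal inequality for the diffusion-scaled primitives would deliver --- with uniformity over a scenario family whose size grows like $n$; it is the exploitation of the $\Theta(n)$ negative drift of $Y^{n,s}$, which makes individual excursions only $O(\log n)$ tall, that makes the sharp bound available, and this in turn dictates the two-sided exponential control of $X^{n,s}_k$ and the passage from tightness to Cram\'er-type tail bounds. Two subsidiary technical points require care: the circular link between smallness of $Y^{n,s}$ and absence of reneging in the high-priority classes, handled via $\tau_n^s$ and a bootstrap; and the fact that the arrival streams $E^n_k$ are renewal, not Poisson, so the step-ratio comparison in the last step should be phrased through the counting processes $E^n_k$, or equivalently by conditioning on the arrival streams, given which the service processes remain Poisson and $Y^{n,s}$ can only decrease between arrivals.
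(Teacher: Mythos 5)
Your overall skeleton --- a stopping time that decouples the smallness of the high-priority queues from the absence of their reneging, a Gronwall bound on $\hat X^{n,s}_i$ up to that time, and a drift estimate exploiting that under FP every departure during a period with $\sum_{k<N}Q^{n,s}_k>0$ removes a high-priority customer while the class-$N$ servers still contribute roughly $\mu_{\min}\rho_N n$ to the total service rate --- matches the paper's. Where you diverge is in converting the $\Theta(n)$ negative drift into smallness of the queues. The paper argues by contradiction on the crossing interval over which $\sum_{k<N}\hat Q^{n,s}_k$ rises from $\eps/2$ to $\eps$ while staying positive: the balance equation forces $\eps\sqrt n/2\le \sum_{k<N}E^n_k[I]-1\cdot D^{n,s}[I]$, and splitting according to whether $|I|\le r_n$ or $|I|>r_n$ (with $r_n\to0$, $\sqrt n\,r_n\to\iy$) reduces everything to the modulus of continuity and sup-norm of the primitives $\hat E^n,\hat S^n$, which do not depend on $s$. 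Uniformity in $s$ is therefore free and only the functional CLT is used. You instead aim at the much stronger bound $\sup_s\|Y^{n,s}\|_T=O_P(\log n)$ and pay for it with a union bound over $O(n)$ scenarios and $O(n)$ excursions, which forces exponential tail control that the lemma (which only needs $o(\sqrt n)$) does not require.

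The genuine gap is in the excursion step. The claim that each excursion of $Y^{n,s}$ is dominated jump-for-jump by a reflected random walk with up/down step-ratio $r<1$ is not available here: the arrivals are general renewal processes with only a finite-variance assumption on the interarrival times, so conditionally on the past the up-jumps are not Bernoulli events competing with exponential clocks, and there is no i.i.d.\ up/down structure to feed into a gambler's-ruin estimate. Your proposed fix --- conditioning on the arrival streams --- removes the random-walk picture altogether: one is left with a deterministic schedule of up-jumps superposed on a Poisson-dominated departure process, and the $O(\log n)$ excursion-height bound must then be re-derived from large-deviation estimates for $\sum_{k<N}E^n_k[t_1,t_2]-D^{n,s}[t_1,t_2]$ holding uniformly over all the subintervals and scenarios entering your union bound. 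Exponential upper-tail bounds for renewal counts do exist in principle (they are lower-tail bounds for sums of positive i.i.d.\ variables), but establishing them with the uniformity and constants your argument needs is a substantial piece of work that the sketch does not carry out. I would replace the excursion argument by the two-regime comparison on the crossing interval, which closes the proof using only $C$-tightness of the primitives and makes the supremum over $s$ costless.
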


\begin{proof}
By the functional central limit theorem,
\begin{align}\label{fclt}
(\hat{E}^n, \hat{S}^n)\Rightarrow (W_1,W_2)\,,
\end{align}
where $W_1$ and $W_2$ are independent $N$-dimensional BMs, with $W_1$ a $(0,A_1)$-BM
and $W_2$ a $(0,A_2)$-BM,
$A_1=\diag(\lambda_iC^2_{\IA_i})$,
and $A_2=\diag(\mu_i)$ (see Section 17 of \cite{Bill}). In particular,
the sequence $(\hat E^n,\hat S^n)$ is $C$-tight.

Fix $\epsilon > 0$. Define the event
$$
\Om^n=\Big\{\sum_{i=1}^{N-1}\hat{Q}^{n}_i(0)\leq \frac{\epsilon}{4}\quad\mbox{and}\quad\bar{\Ps}^{n}_i(0) \geq \rho_i - \frac{\eps_i}{4}\quad\mbox{for all}\quad i\in \{1,2,\ldots ,N-1\}\Big\},
$$
where
$\eps_i=\frac{\epsilon}{\mu_i (N-1)}$.
Then by the assumption \eqref{22} on the initial conditions we have $\mathbb{P}(\Om^n)\rightarrow 1$. For $s \in\SC_0$ define
\begin{align*}
\tau_1^{n,s}=\inf\Big\{t\geq 0:\sum_{i=1}^{N-1}\hat{Q}^{n,s}_i(t)\geq \epsilon\,\,\mbox{ or }\,\,\bar{\Ps}^{n,s}_i(t) \leq \rho_i - \eps_i\mbox{ for some }\, i\in \{1,2,\ldots ,N-1\}\Big\}\,.
\end{align*}
Let $A^{n,s}=\{\tau_1^{n,s}\leq T\}$.
Now let
$$
A_1^{n,s}=\Big\{\omega \in A^{n,s}:\sum_{i=1}^{N-1}\hat{Q}^{n,s}_i(\tau_1^{n,s})\geq \epsilon\Big\}\cap\Om^n,
$$
$$
A_2^{n,s,i}=\Big\{\omega \in A^{n,s}: \sum_{k=1}^{N-1}\hat{Q}^{n,s}_k(\tau_1^{n,s}) < \epsilon\,\, \mbox{ and }\,\, \bar{\Ps}^{n,s}_i(\tau_1^{n,s}) \leq \rho_i - \eps_i \Big\}\cap\Om^n,
\quad i\le N-1.
$$
For $\omega \in A_1^{n,s}$ there exists $\sigma_1^{n,s}=\sigma_1^{n,s}(\om)$ such that
\begin{align}\label{25}
\sum_{i=1}^{N-1}\hat{Q}^{n,s}_i(\sigma_1^{n,s})\leq \frac{\epsilon}{2},\quad\mbox{and, on
 }\,\,I^{n,s}_1:=[\sigma_1^{n,s},\tau_1^{n,s}],\quad\sum_{i=1}^{N-1}\hat{Q}^{n,s}_i >0\,.
\end{align}
Throughout, for $0\le t_1\le t_2<\iy$, $I=[t_1,t_2]$ and $f:\R_+\to\R$, we use the notation
\[
f[t_1,t_2]=f[I]=f(t_2)-f(t_1).
\]
By \eqref{queuelength} and the fact that $R^n_i$ is nondecreasing, we have
on $A_1^{n,s}$
\begin{equation}\label{26}
\frac{\epsilon\sqrt n}{2}\leq
\sum_{i=1}^{N-1}Q^{n,s}_i[I_1^{n,s}]
\leq \sum_{i=1}^{N-1}E^n_i[I_1^{n,s}]
-\sum_{i=1}^{N-1}B^{n,s}_i[I_1^{n,s}].
\end{equation}
By \eqref{25} and \eqref{23}, $1\cdot\Ps^{n,s}(t)=n$ for $t=\sigma_1^{n,s}$
and $t=\tau_1^{n,s}$. Thus by \eqref{19}, $1\cdot B^{n,s}[I_1^{n,s}]
=1\cdot D^{n,s}[I_1^{n,s}]$.
Moreover, since by \eqref{25} the high priority buffers are non-empty on the time interval
of interest,
the priority rule expressed by \eqref{24} dictates that the process $B^{n,s}_N$ does not increase
on that interval. As a result, the last term in \eqref{26} equals
$1\cdot D^{n,s}[I_1^{n,s}]$, and
\[
\frac{\epsilon}{2}\leq \sum_{i=1}^{N-1}\hat{E}^n_i[I_1^{n,s}]
+\sum_{i=1}^{N-1}\frac{\lambda^n_i(\tau_1^{n,s}-\sigma_1^{n,s})}{\sqrt{n}}
-\sum_{i=1}^{N}\hat{D}^{n,s}[I_1^{n,s}]
-n^{-1/2}\sum_{i=1}^{N}\mu_i\int_{\sigma_1^{n,s}}^{\tau_1^{n,s}}\Ps^{n,s}_i(u)du\,.
\]
On the time interval under consideration we have for $i < N$ that
$\Ps^{n,s}_i \geq n\del_i$, where $\del_i=\rho_i-\eps_i$.
Thus, denoting $\mumin=\min_i\mu_i>0$,
\begin{align*}
\sum_{i=1}^N\mu_i\Ps^{n,s}_i&=\sum_{i=1}^{N-1}\mu_i(n\delta_i+\Ps^{n,s}_i-
n\delta_i)+\mu_N\Ps^{n,s}_N\\
&\geq n\biggl(\sum_{i=1}^{N-1}\lambda_i-\epsilon\biggr)+\mumin\sum_{i=1}^{N-1}(\Ps^{n,s}_i-
n\delta_i)+\mumin\Ps^{n,s}_N\\
&=n\biggl(\sum_{i=1}^{N-1}\lambda_i-\epsilon
+\mumin\rho_N+\mumin\eps_i\biggr),
\end{align*}
where the last equality uses the fact that $\sum_{i=1}^N\Ps^{n,s}_i=n$ that is true thanks to
the non-idling condition \eqref{23} and the fact that, by \eqref{25}, the queues
are not all empty.
Therefore for $\epsilon$ small enough there exists a $\delta > 0$, such that
$$
\sum_{i=1}^{N}\mu_i\int_{\sigma_1^{n,s}}^{\tau_1^{n,s}}\Ps^{n,s}_i(u)du
\geq n(\sum_{i=1}^{N-1}\lambda_i+\delta)(\tau_1^{n,s}-\sigma_1^{n,s})\,.
$$
Hence we have
\begin{align*}
\frac{\epsilon}{2}&\leq \sum_{i=1}^{N-1}\hat{E}^n_i[I_1^{n,s}]
-\sum_{i=1}^{N}\hat{D}^{n,s}[I_1^{n,s}]\\
&\qquad+\sum_{i=1}^{N-1}
\frac{(\lambda^n_i-n\lambda_i)(\tau_1^{n,s}-\sigma_1^{n,s})}{\sqrt{n}}
-\sqrt{n}\delta(\tau_1^{n,s}-\sigma_1^{n,s})\,.
\end{align*}
Let $r_n>0$ be a sequence such that $r_n\rightarrow 0$ and
$\sqrt{n}r_n\rightarrow \infty$. If $\tau_1^{n,s}-\sigma_1^{n,s}\leq r_n$ then
\begin{align*}
\frac{\epsilon}{2}&\leq \sum_{i=1}^{N-1}w_T(\hat{E}^n_i,r_n)+\sum_{i=1}^{N}w_T(\hat{S}^n_i,r_n)+Kr_n\,,
\end{align*}where $K$ is a constant and, throughout, for $f:\R_+\to\R^k$ ($k$ a positive integer),
\[
w_T(f,a)=\sup\{\|f(t)-f(s)\|:s,t\in[0,T],\,|t-s|\le a\},\qquad a>0.
\]
On the other hand, if $\tau_1^{n,s}-\sigma_1^{n,s}> r_n$ then
\begin{align*}
\frac{\epsilon}{2}&\leq 2\sum_{i=1}^{N-1}\big\|\hat{E}^n_i\big\|_T + KT + 2\sum_{i=1}^{N-1}\big\|\hat{S}^n_i\big\|_T - \sqrt{n}\delta r_n\,.
\end{align*}
Hence by \eqref{fclt} and the resulting $C$-tightness of $\hat{E}^n_i$ and $\hat{S}^n_i$,
we have
\begin{equation}\label{32}
\mathbb{P}\bigl(\cup_s A^{n,s}_1\bigr)\rightarrow 0,\,\,\mbox{as}\,\,n\rightarrow \infty\,.
\end{equation}

Next, on $A^{n,s,i}_2$, for $i\le N-1$ fixed, again there exists a time
$\sigma^{n,s}_2=\sigma^{n,s}_2(\om)$ such that
$$
\bar{\Ps}^{n,s}_i(\sigma^{n,s}_2)\geq \rho_i-\frac{\eps_i}{2}\,\quad
\mbox{and, on}\,\,I^{n,s}_2:=[\sigma^{n,s}_2,\tau^{n,s}_1],\quad\bar{\Ps}^{n,s}_i\leq \rho _i\,.
$$
Thus $X^{n,s}_i[I^{n,s}_2]\leq \sqrt{n}\epsilon - \frac{n\eps_i}{2}$, or
\[
E^{n}_i[I^{n,s}_2]-D^{n,s}_i[I^{n,s}_2]
-R^{n,s}_i[I^{n,s}_2]\leq \sqrt{n}\epsilon - \frac{n\eps_i}{2},
\]
and therefore
\[
\hat{E}^{n}_i[I^{n,s}_2]-\hat{D}^{n,s}_i[I^{n,s}_2]
+\frac{(\lambda^n_i-n\lambda_i)(\tau_1^{n,s}-\sigma_1^{n,s})}{\sqrt{n}}\leq
\epsilon - \frac{\sqrt{n}\eps_i}{2}+\frac{1}{\sqrt{n}},
\]
whence
\[
-2\big\|\hat{E}^n_i\big\|_T-\big\|\hat{S}^n_i\big\|_T-KT \leq \epsilon
 - \frac{\sqrt{n}\eps_i}{2}+\frac{1}{\sqrt{n}}.
\]
Therefore by the tightness
of $\big\|\hat{E}^n_i\big\|_T$ and $\big\|\hat{S}^n_i\big\|_T$, $n\in\N$ (for $T$ fixed),
we have
\begin{equation}\label{33}
\mathbb{P}\bigl(\cup_s A^{n,s,i}_2\bigr)\rightarrow 0,\,\,\mbox{as}\,\,n\rightarrow \infty\,,
\quad i\le N-1.
\end{equation}
Putting together the estimates \eqref{32} and \eqref{33},
we obtain
$\mathbb{P}(\cap_s (A^{n,s})^c)\rightarrow 1$.
Since $\eps>0$ is arbitrary, the result follows.
\end{proof}

Define
\begin{equation}\label{35}
\theta_i = \lambda_i h_i^{-1}(r_i)\,,
\end{equation}
and note that these constants are positive. By \eqref{12}, under the reference scenario,
class-$i$ customers always renege when
the scaled queue length $\hat{Q}^n_i$ is in the interval $(\theta_i,\theta_i+\frac{1}{\sqrt{n}}]$ and therefore
the scaled queue length never exceeds that bound. Under any other scenario,
there is at most one customer that does not follow the rule \eqref{12}, and so we have
\begin{equation}
  \label{36}
  \hat Q^{n,s}_i(t)\le\theta_i+2n^{-1/2},\qquad t\ge0,\, n\in\N,\, s\in\SC_0,\, i=1,\ldots,N.
\end{equation}
Conversely, a class-$i$ reneging will never take place when
$h_i(\hat Q^{n,s}_i/\la_i)<r_i$, except, possibly, by a single customer.

\begin{lem}\label{implem1}
i.
For $i=1,2,\ldots,N-1$,
\begin{equation}\label{37}
\sup_s\hat R^{n,s}_i(T)\to0,\quad \text{in probability, as $n\to\iy$}.
\end{equation}
ii. For $i=1,\ldots,N$,
\begin{equation}\label{38}
\sup_s\|\bar{\Ps}^{n,s}_i-\rho_i\|_T\rightarrow 0,\quad \text{in probability, as $n\to\iy$}.
\end{equation}
\end{lem}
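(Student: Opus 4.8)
The plan is to establish the two assertions in turn, drawing only on Lemma~\ref{implem}, the bound \eqref{36}, the non-idling condition \eqref{23}, and the functional strong laws for the renewal processes $E^n_i$ and the Poisson processes $S_i$. For (i): by \eqref{12} and \eqref{35}, a rule-following class-$i$ customer that arrives at time $t$ reneges if and only if $\hat Q^{n,s}_i(t-)>\theta_i$ (here $h_i$ strictly increasing and vanishing at $0$ are used). Since under any scenario at most one class-$i$ customer deviates from the rule, the event $\{R^{n,s}_i(T)>1\}$ is contained in $\{\|\hat Q^{n,s}_i\|_T>\theta_i\}$, and because $\theta_i>0$ is fixed, Lemma~\ref{implem} gives $\PP(\sup_s\|\hat Q^{n,s}_i\|_T>\theta_i)\to0$ for $i\le N-1$; on the complementary event $\sup_sR^{n,s}_i(T)\le1$, hence $\sup_s\hat R^{n,s}_i(T)\le n^{-1/2}$, which is \eqref{37}.

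For (ii) with $i\le N-1$ I would proceed via a fluid-limit identification. By \eqref{36} the process $n^{-1}Q^{n,s}_i$ is of type $e^{n,s}$, so, writing $\bar X^{n,s}_i:=n^{-1}X^{n,s}_i=\bar\Ps^{n,s}_i+e^{n,s}$ (and noting $0\le\bar\Ps^{n,s}_i\le1$), adding \eqref{queuelength} and \eqref{19}, substituting \eqref{20}, and using the strong laws for $E^n_i$ and $S_i$, the initial-condition assumption \eqref{22} and part (i) (which makes $\bar R^{n,s}_i$ of type $e^{n,s}$), one obtains the perturbed integral equation
\[
\bar X^{n,s}_i(t)=\rho_i-\mu_i\int_0^t\bigl(\bar X^{n,s}_i(u)-\rho_i\bigr)\,du+e^{n,s}(t),
\]
in which the constant drift $\la_i-\mu_i\rho_i$ has cancelled because $\rho_i=\la_i/\mu_i$. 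A pathwise Grönwall estimate then yields $\sup_s\|\bar X^{n,s}_i-\rho_i\|_T\le e^{\mu_i T}\sup_s\|e^{n,s}\|_T\to0$ in probability, and \eqref{38} for $i\le N-1$ follows.

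The case $i=N$ is the substantive one. Writing $Y^{n,s}=1\cdot X^{n,s}$ and $\bar Y^{n,s}=n^{-1}Y^{n,s}$, the non-idling condition \eqref{23}, together with $1\cdot Q^{n,s}=Y^{n,s}-1\cdot\Ps^{n,s}\ge0$ and $1\cdot\Ps^{n,s}\le n$, gives the identity $1\cdot\Ps^{n,s}(t)=Y^{n,s}(t)\wedge n$, so that $\sum_i\bar\Ps^{n,s}_i=\bar Y^{n,s}\wedge1$; in view of the case just settled it then suffices to show $\sup_s\|(1-\bar Y^{n,s})^+\|_T\to0$ in probability. Fix $\eta\in(0,\rho_N)$. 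The key point is that whenever $\bar Y^{n,s}(t-)<1$ all buffers are empty (by \eqref{23}), so no rule-following customer reneges at $t$; consequently over any interval on which $\bar Y^{n,s}<1$ at most one reneging (the lone deviator's) takes place, contributing $O(n^{-1})$ to the increment of $\bar Y^{n,s}$. Set $\tau=\inf\{t:\bar Y^{n,s}(t)\le1-\eta\}$ and, on $\{\tau\le T\}$, $\sigma=\sup\{t<\tau:\bar Y^{n,s}(t)\ge1-\eta/2\}$, which is well defined once $\bar Y^{n,s}(0)>1-\eta/2$, an event of probability tending to $1$ since $\bar Y^{n,s}(0)\to1$ by \eqref{22}. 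On $(\sigma,\tau]$ one has $\bar Y^{n,s}(t-)<1$ for all $t$, so summing the balance equations and applying the strong laws gives
\[
\bar Y^{n,s}(\tau)-\bar Y^{n,s}(\sigma)=\Bigl(\sum_i\la_i\Bigr)(\tau-\sigma)-\sum_i\mu_i\int_\sigma^\tau\bar\Ps^{n,s}_i(u)\,du+e^{n,s}.
\]
On $(\sigma,\tau]$, $\bar Y^{n,s}\le1-\eta/2$, and $\bar\Ps^{n,s}_i=\rho_i+e^{n,s}$ for $i<N$ by the previous case, whence $\bar\Ps^{n,s}_N=\bar Y^{n,s}\wedge1-\sum_{i<N}\bar\Ps^{n,s}_i\le\rho_N-\eta/2+e^{n,s}$; using $\sum_i\mu_i\rho_i=\sum_i\la_i$, the integral term is at most $(\sum_i\la_i-\mu_N\eta/2)(\tau-\sigma)+e^{n,s}$, and therefore $\bar Y^{n,s}(\tau)-\bar Y^{n,s}(\sigma)\ge\tfrac{\mu_N\eta}{2}(\tau-\sigma)-e^{n,s}\ge-e^{n,s}$. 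On the other hand, the jumps of $\bar Y^{n,s}$ having size $n^{-1}$, the definitions of $\sigma$ and $\tau$ force $\bar Y^{n,s}(\tau)-\bar Y^{n,s}(\sigma)\le-\eta/2+2n^{-1}$. These two bounds are incompatible once the finitely many relevant $e^{n,s}$ quantities, maximised over $s$, fall below $\eta/4$ — an event of probability tending to $1$; on it $\tau>T$ for every $s$, i.e.\ $\bar Y^{n,s}>1-\eta$ throughout $[0,T]$, so $\sup_s\|(1-\bar Y^{n,s})^+\|_T\le\eta$. As $\eta$ is arbitrary the claim follows, and then $\sup_s\|\bar\Ps^{n,s}_N-\rho_N\|_T\le\sup_s\|(1-\bar Y^{n,s})^+\|_T+\sum_{i<N}\sup_s\|\bar\Ps^{n,s}_i-\rho_i\|_T\to0$.

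I expect the case $i=N$ to be the main obstacle: the estimates must be uniform in the scenario, so every error term has to be of the $e^{n,s}$ type, and one must use precisely the structural fact that reneging can occur only when the system is saturated — together with the critical-load identity $\sum_i\mu_i\rho_i=\sum_i\la_i$, which supplies the restoring drift $\tfrac{\mu_N\eta}{2}(\tau-\sigma)$ — to rule out the total customer count drifting down. The high-priority classes (treated by Grönwall) and the bookkeeping with left limits and $n^{-1}$-sized jumps are, by comparison, routine.
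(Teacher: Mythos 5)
Your proof is correct and follows the same overall strategy as the paper's: part (i) is identical (at most one reneging while $\hat Q^{n,s}_i<\theta_i$, then Lemma~\ref{implem}); part (ii) for $i\le N-1$ is the same Gr\"onwall argument on the fluid-scaled balance equations (the paper first isolates $\bar B^{n,s}_i=\la_i\cdot+e^{n,s}$ and then feeds it into the $\Ps$ equation, whereas you fold $Q$ and $\Ps$ into $X$ directly — a cosmetic difference); and for $i=N$ both arguments hinge on the same structural fact that underloading forces empty queues via \eqref{23}, hence no rule-following reneging, combined with an excursion interval $[\sigma,\tau]$ on which the process is pinned between two levels. The one genuine divergence is in how the $i=N$ contradiction is extracted: the paper first gets $(\bar\Ps^{n,s}_N-\rho_N)^+\to0$ from $\sum_i\bar\Ps^{n,s}_i\le1$, then applies the excursion argument to $\bar\Ps^{n,s}_N$ itself using only the class-$N$ balance equation, bounding the drift by $\mu_N\int\bar\Ps^{n,s}_N\le\la_N(\tau-\sigma)$ so that the required $O(n)$ decrease must be supplied by $O(\sqrt n)$ fluctuations; you instead track the total count $1\cdot X^{n,s}$ via the identity $1\cdot\Ps^{n,s}=(1\cdot X^{n,s})\wedge n$ and use the critical-load identity to produce a strictly positive restoring drift $\tfrac{\mu_N\eta}{2}(\tau-\sigma)$, getting a sign contradiction at the fluid scale. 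Your variant is slightly cleaner in that it never needs the preliminary one-sided bound on $\bar\Ps^{n,s}_N$, while the paper's works at the diffusion scale of the fluctuations; both are valid and uniform in $s$, and your acknowledged bookkeeping issues (left limits, $n^{-1}$ jumps, attainment of $\sigma$) are harmless.
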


\begin{proof}
i. By the discussion preceding the Lemma, $R^{n,s}_i(T)\le1$ on the event that
$\|\hat Q^{n,s}_i\|_T<\theta_i$. Hence \eqref{37} follows from Lemma \ref{implem}.

ii. We begin by proving the result for the high-priority classes.
Thus, fix $i\le N-1$.
We have by \eqref{queuelength}, \begin{align*}\bar{Q}^{n,s}_i(t)&=\bar{Q}^{n}_i(0)+\bar{E}^n_i(t)-\bar{B}^{n,s}_i(t)-\bar{R}^{n,s}_i(t)\\
&=\bar{Q}^{n}_i(0)+(\bar{E}^n_i(t)-\lambda_i t)-(\bar{B}^{n,s}_i(t)-\lambda_i t)-\bar{R}^{n,s}_i(t)\,.
\end{align*}
By the functional law of large numbers, $\sup_{0\le t\le T}|\bar E^n_i(t)-\la_it|\to0$ in probability.
Hence the estimates of Lemma \ref{implem} give (recall the convention \eqref{44})
\begin{equation}\label{34}
\bar{B}^{n,s}_i(t)=\lambda_i t+e^{n,s}_t.
\end{equation}
Next, by \eqref{19} and \eqref{20},
using the identity $\rho_i=\la_i/\mu_i$,
{\small\begin{align*}
\bar{\Ps}^{n,s}_i(t)-\rho_i&=\bar{\Ps}^n_i(0)-\rho_i+(\bar{B}^{n,s}_i(t)-\lambda_it)
-\frac{1}{n}\Big[S_i\Big(\int_0^tn\bar{\Ps}^{n,s}_i(u)du\Big)-\mu_i\int_0^tn\bar{\Ps}^{n,s}_i(u)du\Big]
\\&\quad-\mu_i\int_0^t(\bar{\Ps}^{n,s}_i(u)-\rho_i)du.
\end{align*}
Using the fact $\bar\Ps^{n,s}_i\le 1$ we have, for $t\in[0,T]$,
\begin{align*}
|\bar{\Ps}^{n,s}_i(t)-\rho_i| &\leq |\bar{\Ps}^n_i(0)-\rho_i|
+\beta^n_T+n^{-1/2}\|\hat{S}^n_i\|_T
+\mu_i\int_0^T\sup_s\sup_{0\leq r\leq u}|\bar{\Ps}^{n,s}_i(r)-\rho_i|du.
\end{align*}}
It follows from \eqref{22} that $\|\bar\Ps^n(0)-\rho\|\to0$ and from the law of large numbers
for the Poisson process, that $n^{-1/2}\|\hat S^n_i\|_T\to0$, in probability.
Using these facts along with \eqref{34}, the result \eqref{38}, for $i\le N-1$,
follows upon applying Gronwall's lemma.

Next we consider the class $N$.
Because $\sum \rho_i =1$ and $\sum\bar{\Ps}^{n,s}_i \leq 1$, we have from the validity
of \eqref{38} for $i\le N-1$,
$$
\sup_s\sup_{0\leq t\leq T}(\bar{\Ps}^{n,s}_N(t)-\rho_N)^+\rightarrow 0
$$
in probability as $n\rightarrow \infty$.
Using this and the assumption on the initial conditions, the probability of
$\Om^n_1:=\{\gamma^n<\eps/16\}\cap\{|\bar\Ps^n_N(0)-\rho_N|<\eps/2\}$ converges to 1,
where $\gamma^n=\displaystyle\sup_s\sum_{i\le N-1}\|\bar\Ps^{n,s}_i-\rho\|_T$.
Now let
$$
\Om^{n,s}=\{\omega:\inf_{0\leq t\leq T}\bar{\Ps}^{n,s}_N(t) \le \rho_N-\epsilon\}\,.
$$
Then for $\omega \in\Om^{n,s}\cap\{|\bar\Ps^n_N(0)-\rho_N|<\eps/2\}$,
there exist times $0\le\sigma^{n,s}_3(\om)\le\tau^{n,s}_3(\om)\le T$
such that
\begin{align*}
\bar{\Ps}^{n,s}_N(\sigma^{n,s}_3)> \rho_N-\frac{\epsilon}{2}\,\,,\,\,
\bar{\Ps}^{n,s}_N(\tau^{n,s}_3)\leq \rho_N-\eps\,\,\mbox{ and }\,\,
\bar{\Ps}^{n,s}_N(t)\leq \rho_N-\frac{\epsilon}{8}\,\,\text{ for all }\,\,
t\in I^{n,s}:=[\sigma_3^{n,s},\tau^{n,s}_3]\,.
\end{align*}
Also, on the event $\Om^{n,s}\cap\{\gamma^n<\eps/16\}$,
\begin{align*}
\sum_{i=1}^{N-1}\bar{\Ps}^{n,s}_i(t)\leq \sum_{i=1}^{N-1}\rho_i + \frac{\epsilon}{16} \,\,
\text{ for all }\,t\in I^{n,s}.
\end{align*}
Thus on $I^{n,s}$ we have $\sum_{i=1}^{N}\bar{\Ps}^{n,s}_i(t)\leq 1-\epsilon/16 < 1$,
which implies by the non-idling assumption that, on this time interval,
we have $\sum_{n=1}^N Q^{n,s}_i(t)=0$.
As a result, on this time interval there is no reneging under the reference scenario,
and there is at most one reneging under any other scenario.
Recalling that $X^{n,s}=Q^{n,s}+\Ps^{n,s}$,
and using \eqref{queuelength} and \eqref{19}, we obtain, for a given scenario $s$,
on the event $\Om^n_1\cap\Om^{n,s}$,
\begin{align*}
-\frac{n\epsilon}{2}&\geq X^{n,s}_N[I^{n,s}]\geq E^n_N[I^{n,s}]
-D^{n,s}_N[I^{n,s}]-1 \\
&=\sqrt{n}\hat{E}^n_N[I^{n,s}]+\lambda^n_N(\tau^{n,s}_3-\sigma^{n,s}_3)
-\sqrt{n}\hat{D}^{n,s}_N[I^{n,s}]-n\mu_N\int_{I^{n,s}}\bar{\Ps}^{n,s}_N(u)du -1.
\end{align*}
Note that
\[
\mu_N\int_{I^{n,s}}\bar\Ps^{n,s}_N(u)du\le\mu_N\rho_N(\tau^{n,s}_3-\sigma^{n,s}_3)
=\la_N(\tau^{n,s}_3-\sigma^{n,s}_3),
\]
thus
\begin{align*}
-\frac{\epsilon}{2}&\geq -2\frac{\|\hat E^n_N\|_T}{\sqrt{n}}
-2\frac{\|\hat D^n_N\|_T}{\sqrt{n}}+\frac{(\lambda_N^n-n\lambda_N)
(\tau^{n,s}_3-\sigma^{n,s}_3)}{n}-\frac{1}{n}.
\end{align*}
Since $0\le\bar\Ps^{n,s}_N\le 1$, we have $\|\hat D^{n,s}\|_T\le\|\hat S^n\|_T$.
Also, $n^{-1/2}(\la^n_N-n\la_N)$ converges. Hence
\begin{align*}
-\frac{\eps}{2}
\geq
-2\frac{\|E^n_N\|_T}{\sqrt{n}}-2\frac{\|S^n_N\|_T}{\sqrt{n}}-\frac{KT}{\sqrt n}-\frac{1}{n}\,.
\end{align*}
By the tightness of $\|\hat E^n_N\|_T$ and $\|\hat S^n_N\|_T$ for $n\in\N$ (and $T$ fixed)
and the fact that $\PP(\Om^n_1)\to1$,
we obtain $\mathbb{P}(\cup_s \Om^{n,s})\to0$.
Since $\eps>0$ is arbitrary, the result follows.
\end{proof}

Consider a stochastic differential equation with reflection, for a process $Y$ that
lives in
\[
G=\{y\in\R^N:1\cdot y\le \theta_N\},
\]
and reflects on the boundary of $G$ in the direction $-\be_N$.
Let $\{W(t)\}$ be a $(\hat\la,A)$-BM, where
$A=\diag(\lambda_1(C^2_{IA_1}+1),\ldots,\lambda_N(C^2_{IA_N}+1))$.
Let $b:\mathbb{R}^N\rightarrow \mathbb{R}^N$ be given by
\begin{equation}\label{56}
b(y)=-(\mu_1y_1,\ldots,\mu_{N-1}y_{N-1},\mu_N(y_N-(1\cdot y)^+)).
\end{equation}
Let $(X,L)$ be the unique pair of processes that is adapted to the filtration
$\sig(X_0)\vee\sig\{W(u),u\le t\}$, where $X$ has sample paths in $\mathbb{C}_G$,
$L$ has nondecreasing sample paths in $\mathbb{C}_{\R_+}$,
and the pair satisfies a.s.,
\begin{equation}\label{61}
\begin{split}
&X(t)=X_0+W(t)+\int_0^tb(X(u))du-L(t)\be_N,\qquad t\ge0,\\
&\int_{[0,\iy)}1_{\{1\cdot X(t)<\theta_N\}}dL(t)=0\,.
\end{split}
\end{equation}
The existence and uniqueness of such a pair follows from Proposition 3 of \cite{andore} on noting that
$b$ is Lipschitz. We call this pair the solution to the SDE \eqref{61}.

Define $\Gam:\mathbb{D}_{\mathbb{R}^N}([0,T])\rightarrow \mathbb{D}_{\mathbb{R}^N}([0,T])$ by
\begin{equation}\label{50}
\Gam(f)(t)=f(t)-g(t)\be_N\,,
\qquad
g(t)=\sup_{0\leq u\leq t}(\theta_N-1\cdot f(u))^-\,.
\end{equation}
The following two properties follow directly from the definition, namely
there exists a constant $C$ such that
\begin{equation}
  \label{60}
  \|\Gam(f)-\Gam(\tilde f)\|_T\le C\|f-\tilde f\|_T,\qquad f,\tilde f\in\mathbb{D}_{\R^N}([0,T]),
\end{equation}
and
\begin{equation}
  \label{62}
  w_T(\Gam(f),\cdot)\le Cw_T(f,\cdot),\qquad f\in\mathbb{D}_{\R^N}([0,T]).
\end{equation}
Given $z\in\mathbb{D}_{\R^N}$, $z(0)\in G$, we say that
$(y,\ell)\in\mathbb{D}_{\R^N}\times
\mathbb{D}_{\R}$ solves the Skorohod problem (SP)
in $G$, with reflection in the direction $-\be_N$,
for data $z$, if $y(t)\in G$ for all $t$, $\ell$ is nonnegative and nondecreasing, and
\[
y=z-\ell\be_N,\qquad\int_{[0,\iy)}1_{\{1\cdot y<\theta_N\}}d\ell=0.
\]
It is well known that for $z$ as above, a necessary and sufficient condition for $(y,\ell)$
to be a solution is that $y=\Gam(z)$ (this follows e.g., as a special case of the
much broader result of \cite{dupish1}). This will be used in the proof below.

Denote
\begin{align}\label{51}
\hat{W}_i^{n,s}(t)
=\hat{E}^n_i(t)+\frac{\lambda^n_i-n\lambda_i}{\sqrt{n}}t
-\hat{S}_i^{n}(\int_0^t\bar{\Ps}_i^{n,s}(u)du).
\end{align}
Recall conditions C1--C2 from Section \ref{sec1} that characterize $C$-tightness.
We will say that a sequence of processes $\{\xi^{n,s}\}$,
$n\in\N$, $s\in\SC_0$, with sample paths in $\mathbb{D}_{\R^k}$,
is {\it $C$-tight, uniformly in $s$} if
\begin{itemize}
\item[C1$'$.] The sequence of random variables
$\|\xi^{n,s}\|_T$ is tight for every fixed $T<\iy$, and
\item[C2$'$.] For every $T<\iy$,
$\eps>0$ and $\eta>0$ there exist $n_0$ and $\theta>0$ such that
\[
n\ge n_0 \text{ implies } \PP(\sup_s w_T(\xi^{n,s},\theta)>\eta)<\eps.
\]
\end{itemize}

\begin{prop}\label{prop1}
The sequence
$(\hat W^{n,s},\hat X^{n,s},\hat R^{n,s},\hat Q^{n,s},\hat\Ps^{n,s})$ is
$C$-tight, uniformly in $s$. Moreover,
$(\hat W^{n,0},\hat X^{n,0},\hat R^{n,0},\hat Q^{n,0},\hat\Ps^{n,0})$
converges in distribution to $(W,X,L\be_N,Q,\Ps)$,
where $(X,L)$ form the solution to the SDE \eqref{61}, and
\[
Q=(1\cdot X)^+\be_N,
\qquad
\Ps=X-Q.
\]
\end{prop}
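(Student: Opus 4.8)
The proposition has two parts: (a) uniform $C$-tightness of the quintuple, and (b) convergence of the reference scenario ($s=0$) to the claimed reflected SDE. I would organize the proof around the representation of $\hat Q^{n,s}_N$ (and hence of $\hat X^{n,s}$) via the Skorohod map $\Gam$, using the estimates already established in Lemmas \ref{implem} and \ref{implem1}.

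\emph{Step 1: Reduce to class $N$.} By Lemma \ref{implem}, $\sup_s\|\hat Q^{n,s}_i\|_T\to0$ for $i\le N-1$, so these components are trivially $C$-tight uniformly in $s$ (they converge to $0$). By Lemma \ref{implem1}(ii), $\sup_s\|\bar\Ps^{n,s}_i-\rho_i\|_T\to0$ for all $i$, which handles the $\bar\Ps$-part at fluid scale and, more importantly, lets me replace the time-change $\int_0^t\bar\Ps^{n,s}_i(u)du$ inside $\hat S^n_i$ by $\rho_i t$ up to an $e^{n,s}$-term: since $\hat S^n$ is $C$-tight (from \eqref{fclt}) and the time-changes converge uniformly in $s$ to $u\mapsto\rho_i u$, a standard random-time-change argument gives $\hat S^n_i(\int_0^t\bar\Ps^{n,s}_i) = \hat S^n_i(\rho_i t) + e^{n,s}_t$, hence $\hat W^{n,s}_i(t) = \hat E^n_i(t) + \frac{\la^n_i-n\la_i}{\sqrt n}t - \hat S^n_i(\rho_i t) + e^{n,s}_t$. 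The leading term here does not depend on $s$, so $\hat W^{n,s}$ is $C$-tight uniformly in $s$, with $\hat W^{n,0}\To W$ where $W$ is the $(\hat\la,A)$-BM with $A=\diag(\la_i(C^2_{IA_i}+1))$ (the sum of the independent covariances $\la_iC^2_{IA_i}$ and $\la_i$ coming from the two FCLT pieces, after the time change by $\rho_i$ multiplying $\mu_i$ to give $\la_i$).

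\emph{Step 2: Skorohod representation for $\hat X^{n,s}$.} Summing \eqref{41}--\eqref{42} over $i$ and using $B^{n,s}_N$ as the only free process once the high-priority buffers are empty, I would derive that $\hat X^{n,s}$ satisfies $\hat X^{n,s}(t) = \hat X^n(0) + \hat W^{n,s}(t) + \int_0^t b^{n,s}(\hat X^{n,s}(u))\,du - \hat L^{n,s}(t)\be_N + e^{n,s}_t$, where $\hat L^{n,s}$ is a nondecreasing process with $\hat L^{n,s} = \sqrt n\,\hat R^{n,s}_N \cdot(\text{something})$... more precisely, the reflection term arises because $\hat Q^{n,s}_N\ge 0$ and $1\cdot\hat\Ps^{n,s}\le 0$ with the non-idling relation $1\cdot Q^{n,s}>0 \Rightarrow 1\cdot\Ps^{n,s}=n$, i.e. $1\cdot\hat X^{n,s}\le 0$ when $\hat Q^{n,s}_N>0$ and $\hat Q^{n,s}_N = (1\cdot\hat X^{n,s})^+ + e^{n,s}$, so the effective state space is $\{1\cdot y\le \theta_N\}$ — wait, rather the reflection happens at the \emph{upper} barrier $1\cdot y = \theta_N$ coming from \eqref{36}, $\hat Q^{n,s}_N\le\theta_N+2n^{-1/2}$, enforced by reneging. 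Putting these together, $\hat X^{n,s} = \Gam(\hat Z^{n,s}) + e^{n,s}$ where $\hat Z^{n,s}(t) = \hat X^n(0) + \hat W^{n,s}(t) + \int_0^t b(\hat X^{n,s}(u))\,du$ and $b$ is as in \eqref{56} (the drift $b$ being exactly what \eqref{42} contributes at diffusion scale after using $\bar\Ps^{n,s}_i\approx\rho_i$). By \eqref{60} and \eqref{62}, $\Gam$ is Lipschitz and preserves moduli of continuity, so the $C$-tightness of $\hat Z^{n,s}$ (which follows from that of $\hat W^{n,s}$ plus the boundedness of $\hat X^{n,s}$ from Lemma \ref{implem}, \eqref{36}, and the uniform boundedness of $b$ on the relevant range, giving an equicontinuous integral term) transfers to $\hat X^{n,s}$, uniformly in $s$. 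Then $\hat Q^{n,s} = (1\cdot\hat X^{n,s})^+\be_N + e^{n,s}$ and $\hat\Ps^{n,s} = \hat X^{n,s} - \hat Q^{n,s}$ inherit uniform $C$-tightness, and $\hat R^{n,s}$ is controlled from $\hat L^{n,s}$ via the reflection term plus Lemma \ref{implem1}(i) for $i<N$. This proves part (a).

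\emph{Step 3: Identify the limit for $s=0$.} Along a convergent subsequence, $(\hat W^{n,0},\hat X^{n,0})\To(W,X^*)$ with continuous paths. Passing to the limit in $\hat X^{n,0}=\Gam(\hat Z^{n,0})+e^{n,0}$ using continuity of $\Gam$ at continuous paths (from \eqref{60}) and the continuous mapping theorem, together with convergence of the drift integral (here I would use that $b$ is Lipschitz, \eqref{36}, and uniform integrability/continuity to pass $\int_0^t b(\hat X^{n,0})\to\int_0^t b(X^*)$), I get that $X^*$ together with $L^* := g$-term from $\Gam$ solves the SP for data $X_0 + W + \int_0^\cdot b(X^*)$, i.e. $(X^*,L^*)$ satisfies \eqref{61}. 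By the uniqueness of the solution to \eqref{61} (Proposition 3 of \cite{andore}, $b$ Lipschitz), $X^*=X$, $L^*=L$, the subsequential limit is unique, hence the full sequence converges, and the formulas $Q=(1\cdot X)^+\be_N$, $\Ps=X-Q$ follow from the prelimit identities.

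\emph{Main obstacle.} The delicate point is Step 2: justifying that the reneging at the upper barrier $\theta_N$ contributes \emph{exactly} a one-sided reflection term (and that the lower constraint $\hat Q^{n,s}_N\ge0$ does not produce a second reflection in the diffusion limit) — this requires carefully showing $\hat Q^{n,s}_N = (1\cdot\hat X^{n,s})^+ + e^{n,s}$, i.e. that the queue is essentially $(1\cdot\hat X^{n,s})^+$ because idleness and queue cannot both be macroscopic (non-idling, \eqref{23}) and the state never drifts far below $0$ in $1\cdot\hat X$. Equally delicate is handling all of this \emph{uniformly in $s$}: every $e^{n,s}$ appearing must satisfy \eqref{44}, which means each estimate (the random time change in Step 1, the Gronwall-type bound on the drift, the reflection identity) must be carried out with a $\sup_s$ in front, relying throughout on the uniform estimates of Lemmas \ref{implem} and \ref{implem1} and on the fact that a single deviating customer changes counting processes by $O(1)$, hence diffusion-scaled quantities by $O(n^{-1/2})$.
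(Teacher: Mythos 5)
Your proposal follows essentially the same route as the paper: reduce via Lemmas \ref{implem} and \ref{implem1} to a Skorohod-problem representation at the upper barrier $1\cdot y=\theta_N$ with reflection in direction $-\be_N$ driven by the class-$N$ reneging, use the Lipschitz properties \eqref{60}--\eqref{62} of $\Gam$ together with Gronwall for the uniform $C$-tightness, and identify the $s=0$ limit through uniqueness of the reflected SDE \eqref{61}. One small correction: $\hat X^{n,s}$ is \emph{not} a priori bounded by Lemma \ref{implem} and \eqref{36} (only $\hat Q^{n,s}$ is; the components $\hat\Ps^{n,s}_i$ are only bounded below by $-\rho_i\sqrt n$), so the tightness of $\sup_s\|\hat X^{n,s}\|_T$ must be obtained first, from the Gronwall argument applied to the fixed-point relation $\tilde Y^{n,s}=\Gam\bigl(\hat X^n(0)+\hat W^{n,s}+\int_0^\cdot b(\tilde Y^{n,s}(u))du+e^{n,s}\bigr)$ exactly as the paper does, rather than being used as an input to the equicontinuity of the drift integral.
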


\begin{proof}
The $C$-tightness of $\hat W^{n,s}$, uniformly in $s$,
follows from \eqref{51} using \eqref{fclt} and the fact that $\bar{\Ps}^{n,s}_i\le 1$.
By \eqref{41}--\eqref{43},
\begin{align*}
&\hat{X}^{n,s}_i=\hat{X}^n_i(0)+\hat W^{n,s}_i
-\mu_i\int_0^\cdot\hat{\Ps}^{n,s}_i(u)du-\hat{R}^{n,s}_i\,.
\end{align*}
Thus
\begin{align*}
\hat{\Ps}^{n,s}_i&=\hat{X}^n_i(0)+\hat W^{n,s}_i-\hat{Q}^{n,s}_i
-\mu_i\int_0^\cdot\hat{\Ps}^{n,s}_i(u)du-\hat{R}^{n,s}_i,\qquad i=1,\ldots,N-1,
\end{align*}
and, noting that by \eqref{23} one has $1\cdot\hat Q^{n,s}=(1\cdot\hat X^{n,s})^+$,
\begin{align*}
\hat{X}^{n,s}_N&=\hat{X}^n_N(0)+\hat{W}^{n,s}_N
-\mu_N\int_0^\cdot(\hat{X}^{n,s}_N(u)-(1\cdot\hat{X}^{n,s}(u))^+)du -\mu_N\int_0^\cdot\sum_{i=1}^{N-1}\hat{Q}^{n,s}_i(u)du-\hat{R}^{n,s}_N\\
&=\hat{X}^n_N(0)+\hat{W}^{n,s}_N-
\mu_N\int_0^\cdot(\hat{X}^{n,s}_N(u)-(\hat{X}^{n,s}_N(u)+
\sum_{i=1}^{N-1}\hat{\Ps}^{n,s}_i(u))^+)du\\ &\qquad
-\mu_N\int_0^t\sum_{i=1}^{N-1}\hat{Q}^{n,s}_i(u)du
+\mu_N\int^\cdot_0\{(1\cdot\hat{X}^{n,s}(u))^+-(\hat{X}^{n,s}_N(u)+
\sum_{i=1}^{N-1}\hat{\Ps}^{n,s}_i(u))^+\}du-\hat{R}^{n,s}_N.
\end{align*}
Defining
$Y^{n,s}_i=\hat{\Ps}^{n,s}_i$, $i=1,\ldots,N-1$, and $Y^{n,s}_N=\hat{X}^{n,s}_N$,
we have, using Lemma \ref{implem} and Lemma \ref{implem1}(i),
\begin{align}
\label{54}
Y^{n,s}_i&=\hat{X}^n_i(0)+\hat{W}^{n,s}_i-
\mu_i\int_0^\cdot Y^{n,s}_i(u)du+e^{n,s}_i,\qquad i=1,\ldots,N-1,\\
Y^{n,s}_N&=\hat{X}^n_N(0)+\hat{W}^{n,s}_N
-\mu_N\int_0^\cdot(Y^{n,s}_N(u)-(1\cdot Y^{n,s}(u))^+)du-\hat{R}^{n,s}_N+e^{n,s}\,.
\label{55}
\end{align}
Let
\begin{equation}\label{57}
F^{n,s}=1\cdot Y^{n,s}\wedge\theta_N-1\cdot Y^{n,s}\,.
\end{equation}
Then
\begin{align}\label{58}
1\cdot Y^{n,s}=\hat{X}^{n,s}_N+\sum_{i=1}^{N-1}\hat{\Ps}^{n,s}_i
=\hat{Q}^{n,s}_N+1\cdot\hat{\Ps}^{n,s}
\leq \hat{Q}^{n,s}_N\leq \theta_N +\frac{2}{\sqrt{n}}\,,
\end{align}
by \eqref{36}.
Thus $|F^{n,s}|\leq \frac{2}{\sqrt{n}}$. Further define
$\tilde{Y}^{n,s}_i=Y^{n,s}_i + \frac{1}{N}F^{n,s}$, $i=1,\ldots,N$.
Then $\tilde{Y}^{n,s}$ satisfies
\begin{equation}
  \label{52}
  \tilde Y^{n,s}(t)\in G,\qquad t\ge0,
\end{equation}
and, as follows from \eqref{56}, \eqref{54} and \eqref{55},
\begin{align}\label{53}
\tilde{Y}^{n,s}=\hat{X}^{n}(0)+\hat{W}^{n,s}
+\int_0^\cdot b(\tilde{Y}^{n,s}(u))du-\hat{R}^{n,s}_N\be_N + e^{n,s}\,.
\end{align}
Under the reference scenario, no class-$N$ reneging occurs when $\hat Q^{n,0}<\theta_N$,
that is,
\[
\int 1_{\{\hat Q^{n,0}_N(t-)<\theta_N\}}d\hat R^{n,0}_N(t)=0.
\]
As a result, the same is true with $\hat Q^{n,0}_N(t-)$ replaced by $\hat Q^{n,0}_N(t)$.
Under any other scenario, there may be one customer that does not follow the rule.
For $s=(N,j)$, $j\in\N$, write
$\tilde R^{n,s}_N$ for the normalized reneging count of all class-$N$ customers
except for customer $(N,j)$ (if it reneges).
For any other $s\in\SC_0$, let $\tilde R^{n,s}_N=\hat R^{n,s}_N$.
Then $\tilde R^{n,s}_N$ is nondecreasing and satisfies
\begin{equation}\label{59}
|\tilde R^{n,s}_N-\hat{R}^{n,s}_N|\leq n^{-1/2},
\end{equation}
and
\[
\int 1_{\{\hat{Q}^{n,s}_N(t) < \theta_N\}}d\tilde{R}^{n,s}_N(t) = 0.
\]
Let us show that $1\cdot\tilde Y^{n,s}<\theta_N$ implies $\hat Q^{n,s}_N<\theta_N$.
Indeed, by \eqref{57}, the former implies that $1\cdot Y^{n,s}<\theta_N$.
Now, $1\cdot Y^{n,s}=\hat Q^{n,s}_N+1\cdot\hat\Ps^{n,s}$, by \eqref{58}.
Thus either $\hat Q^{n,s}_N=0$, or $\hat Q^{n,s}_N>0$ in which case
$1\cdot\Ps^{n,s}=0$ by the non-idling condition \eqref{23}. In both cases,
$\hat Q^{n,s}_N<\theta_N$. It thus follows that
\begin{align}\label{sp1}
\int 1_{\{1\cdot\tilde{Y}^{n,s} < \theta_N\}}d\tilde{R}^{n,s}_N = 0.
\end{align}
By \eqref{53} and \eqref{59},
\begin{align}\label{sp2}
\tilde{Y}^{n,s}=\hat X^n(0)+\hat{W}^{n,s}+\int_0^\cdot b(\tilde{Y}^{n,s}(u))du-\tilde{R}^{n,s}\be_N
+e^{n,s}\,.
\end{align}
Hence from \eqref{52}, \eqref{sp1} and \eqref{sp2}, $(\tilde Y^{n,s},\tilde R^{n,s}_N)$
solves the aforementioned SP for the data
\[
\hat X^n(0)+\hat{W}^{n,s}+\int_0^\cdot b(\tilde{Y}^{n,s}(u))du+e^{n,s}.
\]
Therefore
\begin{align}\label{rep}
\tilde{Y}^{n,s} &= \Gam\Big(\hat X^n(0)+\hat{W}^{n,s}
+\int_0^{\cdot}b(\tilde{Y}^{n,s}(u))du+e^{n,s}\Big)\,,\\
\label{rep1}
\tilde{R}^{n,s}\be_N
&= (I - \Gam)\Big(\hat X^n(0)+\hat{W}^{n,s}+\int_0^{\cdot}b(\tilde{Y}^{n,s}(u))du+e^{n,s}\Big).
\end{align}
The convergence of $\hat X^n(0)$, the uniform $C$-tightness of $\hat W^{n,s}$, the Lipschitz property of $b$
and the Lipschitz property of $\Gam$, as expressed by \eqref{60},
imply tightness of the r.v.s $\sup_s\|\tilde Y^{n,s}\|_T$,
upon an application of Gronwall's lemma to \eqref{rep}. Hence, using again
\eqref{rep}, along with the property \eqref{62}, shows that
the processes $\tilde Y^{n,s}$ are $C$-tight, uniformly in $s$.
As a result, $\tilde R^{n,s}_N$ are also $C$-tight, uniformly in $s$.
By equations \eqref{sp2}, \eqref{rep} and \eqref{rep1}, any subsequential weak limit
of $(\hat W^{n,0},\tilde Y^{n,0},\tilde R^{n,0}_N)$ must be equal in distribution
to $(W,X,L)$. As a result, $(\hat W^{n,0},\tilde Y^{n,0},\tilde R^{n,0}_N)\To(W,X,L)$. From the definition of $\tilde{Y}^{n,s}$ and Lemma \ref{implem} it follows that
$\hat{X}^{n,s}=\tilde{Y}^{n,s}+e^{n,s}$. Moreover,
since by Lemma \ref{implem1}, $\hat R^{n,s}_i=e^{n,s}$ for $i\le N-1$,
we have $(\hat W^{n,0},\hat X^{n,0},\hat R^{n,0})\To(W,X,L\be_N)$.
Finally, the fact $\hat Q^{n,s}_i=e^{n,s}$, $i\le N-1$, stated in Lemma \ref{implem},
and the relations $1\cdot\hat Q^{n,s}=(1\cdot\hat X^{n,s})^+$,
$\hat\Ps^{n,s}=\hat X^{n,s}-\hat Q^{n,s}$ yield the result by the continuous
mapping theorem.
\end{proof}

\section{\textbf{Serve the Longest Queue}}\label{sec4}

In this section we carry out our analysis under the SLQ scheduling.
The crucial property in this case the state space collapse exhibited by the queue length processes.
Recall the constants $\theta_i$ from \eqref{35}, that determine the upper limit on the value attained
by $\hat Q^{n,s}_i$. While in the previous section the threshold of the least priority class,
$\theta_N$, was significant, under the current service policy, the property that
queue lengths remain equal makes the {\it minimal} threshold important.
Thus, assume that the classes are labeled in such a way that
\[
\theta_1\ge\cdots\ge\theta_N,
\]
and let $M=\min\{i:\theta_i=\theta_N\}$. We first treat the case $M=N$.

\begin{lem}\label{implem3}
Assume $M=N$. Fix $T$.
\\
i. For $i=1,2,\ldots,N$ we have
\begin{align*}
&\sup_s\|\hat{Q}^{n,s}_i-N^{-1}(1\cdot\hat{X}^{n,s})^+\|_T\to 0, \text{ in probability,
as } n\rightarrow \infty,\\
&\sup_s\|\bar{\Ps}^{n,s}_i-\rho_i\|_T \to 0, \text{ in probability,
as } n\rightarrow \infty\,.
\end{align*}
ii. For $i=1,2,\ldots,N-1$, $\sup_s\hat R^{n,s}_i(T)\to0$, in probability, as $n\to\iy$.
\end{lem}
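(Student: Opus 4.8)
The plan is to establish the three assertions together by a single stopping-time bootstrap, carried out uniformly in $s\in\SC_0$, in the spirit of Lemmas \ref{implem} and \ref{implem1}. The state space collapse rests on the following mechanism: under SLQ, whenever two buffers differ in length, the longer one (being among the maximal buffers) absorbs the entire service effort of the freed servers while the shorter one receives none; since the aggregate service rate is $\sum_k\mu_k\Ps^{n,s}_k=n\sum_k\lambda_k+o(n)$ once the utilizations are pinned down, a positive diffusion-scaled imbalance $\hat Q^{n,s}_i-\hat Q^{n,s}_j$ feels a drift of order $\sqrt n$ toward zero, so that imbalances of order $1$ cannot persist. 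This uses $\bar\Ps^{n,s}_k\approx\rho_k$, which is part of the conclusion; the circularity is broken by noting that the convergence of the utilizations and the absence of class-$i$ reneging for $i\le N-1$ can be obtained independently of the collapse, and then inserted into the collapse argument.

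Fix small $\eps>0$, put $\eps_k=\eps/(\mu_k N)$, and for $s\in\SC_0$ let $\tau^{n,s}\in[0,T]$ be the first time $t$ at which one of the following holds: (a) $\max_{i\ne j}(\hat Q^{n,s}_i(t)-\hat Q^{n,s}_j(t))\ge\eps$; (b) $\bar\Ps^{n,s}_k(t)\le\rho_k-\eps_k$ for some $k$; (c) $\hat Q^{n,s}_i(t)\ge\theta_i-\eps$ for some $i\le N-1$; set $\tau^{n,s}=\infty$ otherwise. By \eqref{36} all buffers are $O(\sqrt n)$ and $\bar Q^{n,s}_i\to0$ uniformly in $s$, so, using \eqref{22} and $M=N$ (whence $\theta_i>\theta_N$, $i\le N-1$), $\PP(\cup_s\{\tau^{n,s}=0\})\to0$, and the Lemma reduces to $\PP(\cup_s\{\tau^{n,s}\le T\})\to0$: on $\{\tau^{n,s}>T\}$, (b) and $\sum_k\bar\Ps^{n,s}_k\le1=\sum_k\rho_k$ force $\|\bar\Ps^{n,s}_k-\rho_k\|_T\le\sum_l\eps_l$; (a) and the identity $1\cdot\hat Q^{n,s}=(1\cdot\hat X^{n,s})^+$ (from \eqref{23}) force $\|\hat Q^{n,s}_i-N^{-1}(1\cdot\hat X^{n,s})^+\|_T\le(N-1)\eps$; and (c) and \eqref{36} keep $\hat Q^{n,s}_i$, for $i\le N-1$, in the joining region, so $\hat R^{n,s}_i(T)\le n^{-1/2}$. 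Letting $\eps\downarrow0$ gives the stated limits.

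To bound $\PP(\cup_s\{\tau^{n,s}\le T\})$ one treats separately the events that $\tau^{n,s}\le T$ and that (a), resp.\ (b), resp.\ (c), is the first of the three to occur, rolling back in each case to the last time $\sigma^{n,s}<\tau^{n,s}$ at which the relevant quantity sat at half the threshold; on $[\sigma^{n,s},\tau^{n,s}]$ none of (a)--(c) has occurred yet, so the imbalance is $\le\eps$, $\bar\Ps^{n,s}_k\ge\rho_k-\eps_k$, and classes $1,\dots,N-1$ do not renege. If (c) occurs first, a contradiction is immediate once (a) is controlled, since then $\hat Q^{n,s}_i\le N^{-1}(1\cdot\hat Q^{n,s})+\eps\le\max_k\hat Q^{n,s}_k+\eps\le\theta_N+2n^{-1/2}+\eps<\theta_i-\eps$ for $n$ large. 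If (b) occurs first, one mirrors Lemma \ref{implem1}(ii): on the excursion $\bar Q^{n,s}_k\to0$ and classes $1,\dots,N-1$ do not renege, so the balance equations \eqref{queuelength}, \eqref{19}, \eqref{20} and Gronwall's lemma pin $\bar\Ps^{n,s}_k$, $k\le N-1$, to $\rho_k$ up to $o(1)$, contradicting (b) for such $k$; for $k=N$ one argues as in the second half of that proof --- if $\bar\Ps^{n,s}_N$ dips below $\rho_N-\eps_N$, then since $\bar\Ps^{n,s}_j\approx\rho_j$ for $j<N$ one has $\sum_j\bar\Ps^{n,s}_j<1$ on a sub-interval, hence all buffers empty and class-$N$ reneging there at most one, and the balance equation for $X^{n,s}_N$ yields a contradiction via the $C$-tightness of $(\hat E^n,\hat S^n)$ from \eqref{fclt}.

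The crux is case (a), the collapse itself. On $[\sigma^{n,s},\tau^{n,s}]$ the imbalance $\max_k\hat Q^{n,s}_k-\min_k\hat Q^{n,s}_k$ lies in $[\eps/2,\eps+o(1)]$, so the minimal buffers are strictly shorter than the maximal ones and thus unserved, while $\bar\Ps^{n,s}_k=\rho_k+O(\eps)$; hence, for $n$ large, $\min_k Q^{n,s}_k$ is nondecreasing on the excursion and the growth of the imbalance is attributable entirely to that of $\max_k Q^{n,s}_k$. Writing $Q^{n,s}_i=Q^{n,s}_i(0)+J^{n,s}_i-B^{n,s}_i$ with $J^{n,s}_i$ as in \eqref{40}, one tracks a functional of the queue-length vector measuring its spread (working with $\max_k Q^{n,s}_k$ directly is awkward when several buffers tie at the maximum): the maximal buffers absorb the full aggregate rate $\sum_k\mu_k\Ps^{n,s}_k=n\sum_k\lambda_k+O(\eps n)$, which for $\eps$ small exceeds the rate at which arrivals can lift the top, so the spread is driven down at rate of order $\sqrt n$ on the diffusion scale --- incompatible with the imbalance rising from $\eps/2$ to $\eps$ unless $\tau^{n,s}-\sigma^{n,s}$ is small, in which case that rise is governed by the oscillation $w_T(\hat E^n,\cdot)+w_T(\hat S^n,\cdot)$ over a short window. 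The $C$-tightness of $(\hat E^n,\hat S^n)$, with one modulus common to all $s$, then yields $\PP(\cup_s\{\tau^{n,s}\le T\})\to0$. This step is the main obstacle: unlike in the FP analysis, the identities of the longest and shortest buffers change along the excursion, ties at the maximum must be handled carefully, and every estimate must remain uniform over the infinite family $\SC_0$.
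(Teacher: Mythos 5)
Your overall architecture is the same as the paper's: a stopping time triggered by the first violation of (queue collapse, high-class utilization, class-$N$ utilization), a roll-back to half the threshold, the short-interval/long-interval dichotomy settled by the $C$-tightness moduli of $(\hat E^n,\hat S^n)$ taken uniformly over $s$, and the observation that for $M=N$ the collapse keeps $\hat Q^{n,s}_i$ strictly below $\theta_i$ for $i\le N-1$, so reneging of those classes is at most one customer. Your treatments of (b) and (c) are correct and match the paper's events $A^{n,s,i}_2$ and $A^{n,s}_3$.

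The gap is in case (a), which is the only genuinely new ingredient here and which you leave as a placeholder ("one tracks a functional of the queue-length vector measuring its spread"). Your own heuristic --- that the maximal buffers absorb the full service effort, so the max has downward drift of order $n$ --- does not convert into an estimate, for exactly the reasons you flag: when several buffers tie at the maximum the max decreases only after all of them are served down, the identity of the maximal set changes along the excursion, and a roll-back for a fixed pair $(i,j)$ with $\hat Q^{n,s}_i-\hat Q^{n,s}_j$ crossing from $\eps/2$ to $\eps$ only tells you $Q^{n,s}_j$ is unserved (hence nondecreasing) and $Q^{n,s}_i$ must rise by $\eps\sqrt n/2$; since you have no lower bound on $B^{n,s}_i$ over the excursion, the bound $Q^{n,s}_i[I]\le E^n_i[I]$ is useless on intervals of length $\gg n^{-1/2}$. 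The paper's resolution is to monitor, for the offending class $i$, the deficit $\hat Q^{n,s}_i-N^{-1}(1\cdot\hat X^{n,s})^+$ rather than a pairwise spread. On the excursion class $i$ is below the average, hence below the maximum, hence $B^{n,s}_i$ is flat and $Q^{n,s}_i$ is nondecreasing; therefore the deficit can deepen by $\eps/2$ only if the average, i.e.\ $1\cdot\hat Q^{n,s}$, rises by $\eps/2$, so $\sum_{j\ne i}Q^{n,s}_j$ must rise by $N\eps\sqrt n/2$. Since $B^{n,s}_i$ is flat, $\sum_{j\ne i}B^{n,s}_j=1\cdot B^{n,s}=1\cdot D^{n,s}$ on the excursion, and the increment of $\sum_{j\ne i}Q^{n,s}_j$ is bounded by $\sum_{j\ne i}E^n_j-1\cdot D^{n,s}$, which carries the explicit negative drift $-n\lambda_i$; the argument of Lemma \ref{implem} then closes the case. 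Without this (or an equivalent explicit functional with a provable drift bound), the state-space-collapse step is asserted rather than proved.
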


\begin{proof}
Fix $\epsilon > 0$. Let $\eps_1=\frac{\eps}{4(N-1)}$ and consider the event
\begin{align*}
\Om^n = \biggl\{\hat{Q}^{n}_i(0)\leq \frac{\epsilon}{8}\,\, \mbox{and}
\,\,|\bar{\Ps}^{n}(0)-\rho_i|\leq \frac{\epsilon_1}{2}\quad \mbox{for all}
\quad i=1,\ldots,N\biggr\}\,.
\end{align*}
Then it follows from the assumptions that $\mathbb{P}({\Om}^n)\rightarrow 1$.
For $s \in\SC_0$ define
\begin{align*}
\tau^{n,s}_1 &= \inf\biggl\{t\geq 0:\min _i \hat{Q}^{n,s}_i(t)
-N^{-1}(1\cdot\hat{X}^{n,s}(t))^+ \leq -\epsilon,\\
& \hspace{8em}\text{ or } |\bar{\Ps}^{n,s}_i(t)-\rho_i| \geq \epsilon_1
\mbox{ for some } i=1,\ldots,N-1,\\
& \hspace{8em}\text{ or } |\bar{\Ps}^{n,s}_N(t)-\rho_N|
\geq \epsilon\biggr\}.
\end{align*}
Let ${A}^{n,s}=\{{\tau}^{n,s}_1\leq T\}$ and $\displaystyle{A}^n=\cup_s{A}^{n,s}$.
Now let
\begin{align*}
{A}^{n,s,i}_1&=\bigl\{\omega \in {A}^{n,s}:\hat{Q}^{n,s}_i({\tau}^{n,s}_1)
-N^{-1}(1\cdot\hat{X}^{n,s}({\tau}^{n,s}_1))^+ \leq
-\epsilon\bigr\}\cap {\Om}^n\,,\qquad i=1,\ldots,N,
\\
{A}^{n,s,i}_2&=\bigl\{\omega \in {A}^{n,s}:\min_j\hat{Q}^{n,s}_j({\tau}^{n,s}_1)
-N^{-1}(1\cdot\hat{X}^{n,s}({\tau}^{n,s}_1))^+ >
-\epsilon
\\
&\hspace{10em} \mbox{and}\,\, |\bar{\Ps}^{n,s}_i({\tau}^{n,s}_1)-\rho_i|
\geq \eps_1\bigr\}\cap {\Om}^n\,,\qquad i=1,\ldots,N-1,
\\
{A}^{n,s}_3&=\bigl\{\omega \in {A}^{n,s}:\min_j\hat{Q}^{n,s}_j({\tau}^{n,s}_1)
-N^{-1}(1\cdot\hat{X}^{n,s}({\tau}^{n,s}_1))^+ > -\epsilon,
\\
&\hspace{10em}\max_{j\le N-1} |\bar{\Ps}^{n,s}_j({\tau}^{n,s}_1)-\rho_j|< \eps_1,
\\
&\hspace{10em}\mbox{and} \,\,|\bar{\Ps}^{n,s}_N({\tau}^{n,s}_1)-\rho_N|
\geq \epsilon \bigr\}\cap {\Om}^n\,.
\end{align*}
For $\omega \in {A}^{n,s,i}_1$, there exists ${\sigma}^{n,s}_1$ such that
\begin{align}
\hat{Q}^{n,s}_i({\sigma}^{n,s}_1)-N^{-1}(1\cdot\hat{X}^{n,s}({\sigma}^{n,s}_1))^+
> -\frac{\epsilon}{2}\,\, \mbox{and, on} \,\,I^{n,s}_1:=[{\sigma}^{n,s}_1,
{\tau}^{n,s}_1],\, \hat{Q}^{n,s}_i-N^{-1}(1\cdot\hat{X}^{n,s})^+ < 0\,.
\end{align}
Note that $1\cdot\hat X^{n,s}=1\cdot\hat Q^{n,s}$, hence, on the time interval $I_1^{n,s}$,
the $i$th queue length is less than the average.
Since the scheduling policy always chooses the longest queue and on this time interval,
no customer from class $i$ enters service. Therefore the class-$i$ queue length
can only increase during this period. Thus we have
\begin{align}\label{29}
N^{-1}(1\cdot\hat{X}^{n,s})^+[I^{n,s}_1]
=N^{-1}(1\cdot\hat{Q}^{n,s})[I^{n,s}_1]
\geq \hat{Q}^{n,s}_i[I^{n,s}_1]+\frac{\epsilon}{2}\,.
\end{align}
Hence
$N^{-1}\sum_{j \neq i}\hat{Q}^{n,s}_j\geq \frac{\epsilon}{2}$, and so
by the balance equation for $Q^{n,s}$, \eqref{queuelength},
\begin{equation}\label{28}
\frac{\epsilon\sqrt nN}{2}\leq
\sum_{j\neq i}Q^{n,s}_j[I_1^{n,s}]
\leq \sum_{j \neq i}E^n_j[I_1^{n,s}]
-\sum_{j\neq i}B^{n,s}_j[I_1^{n,s}].
\end{equation}
Since, as argued above, $B^{n,s}_i[I^{n,s}]=0$,
it follows that the last term of
\eqref{28} equals $1\cdot B^{n,s}[I^{n,s}_1]$, and since $1\cdot\Ps^{n,s}=n$ on this interval,
it follows from \eqref{19} that the same term equals $1\cdot D^{n,s}[I_1^{n,s}]$.
The argument from Lemma \ref{implem} (following \eqref{26}) now shows that
$
\mathbb{P}\bigl(\cup_s{A}^{n,s,i}_1\bigr)\rightarrow 0\,.
$

Now we analyze the event ${A}^{n,s,i}_2$. By \eqref{19},
\begin{align}\notag
\bar{\Ps}^{n,s}_i(t)-\rho_i&=\bar{\Ps}^n_i(0)-\rho_i
+(\bar{B}^{n,s}_i(t)
-\lambda_it)-\frac{1}{n}\Big(S^n_i(\int_0^tn\bar{\Ps}^{n,s}_i(u)du)
-\mu_i\int_0^tn\bar{\Ps}^{n,s}_i(u)du\Big) \\
\notag
&\quad-\mu_i\int_0^t(\bar{\Ps}^{n,s}_i(u)-\rho_i)du \\
\notag
&=\bar{\Ps}^n_i(0)-\rho_i+(\bar{E}^{n}_i(t)
-\lambda_it)-\frac{1}{n}\Big(S^n_i(\int_0^tn\bar{\Ps}^{n,s}_i(u)du)
-\mu_i\int_0^tn\bar{\Ps}^{n,s}_i(u)du\Big) \\
&\quad-\mu_i\int_0^t(\bar{\Ps}^{n,s}_i(u)-\rho_i)du
-\bar{Q}^{n,s}_i(t)-\bar{R}^{n,s}_i(t)\,.
\label{73}
\end{align}
Thus, for $t\in[0,T]$,
\begin{align*}
|\bar{\Ps}^{n,s}_i(t)-\rho_i|\leq |\bar{\Ps}^n_i(0)-\rho_i|+\|\bar{E}^{n}_i
-\lambda_i\cdot\|_T+n^{-1/2}\|\hat{S}^n_i\|_T+\|\bar{Q}^{n,s}_i\|_T\\+\bar{R}^{n,s}_i(t)
+\mu_i\int_0^t|\bar{\Ps}^{n,s}_i(u)-\rho_i|du\,.
\end{align*}
And so by Gronwall's lemma we have
\begin{align*}
|\bar{\Ps}^{n,s}_i(t)-\rho_i|\leq \bigl(|\bar{\Ps}^n_i(0)
-\rho_i|+\|\bar{E}^{n}_i
-\lambda_i\cdot\|_T+n^{-1/2}\|\hat{S}^n_i\|_T+\|\bar{Q}^{n,s}_i\|_T
+\bar{R}^{n,s}_i(t)\bigr)e^{\mu_i T}.
\end{align*}
Using the identity $(1\cdot\hat{X}^{n,s})^+=1\cdot\hat{Q}^{n,s}$, we have on ${A}^{n,s,i}_2$
that $\min_j\hat Q^{n,s}\ge N^{-1}1\cdot\hat Q^{n,s}-\eps$ up to the time $\tau_1^{n,s}$.
As a result, $\max_j\hat Q^{n,s}_j\le N^{-1}1\cdot\hat Q^{n,s}+N\eps$.
Using the fact that the queue length is limited by $\hat{Q}^{n,s}_N\leq \theta_N +2n^{-1/2}$
at all times, it follows that for all large $n$,
up to time ${\tau}_1^{n,s}$,
$$
\max_j\hat{Q}^{n,s}_j\leq \theta_N +(N+1)\epsilon\,.
$$
Hence, if $\eps$ is sufficiently small then up to time ${\tau}_1^{n,s}$
there can be at most one reneging of class-$j$ customers for $j\le N-1$.
Thus, on ${A}^{n,s,i}_2$, we have
\begin{align*}
\eps_1\leq |\bar{\Ps}^{n,s}_i({\tau}_1^{n,s})-\rho_i|\leq
\bigl(|\bar{\Ps}^n_i(0)-\rho_i|+\|\bar{E}^{n}_i
-\lambda_i\cdot\|_T+n^{-1/2}\|\hat{S}^n_i\|_T+n^{-1/2}(\theta_i+1)
+n^{-1}\bigr)e^{\mu_i T}.
\end{align*}
Using the convergence of $\hat E^n$ and $\hat S^n$ \eqref{fclt} and that of
the initial condition \eqref{22}, we therefore obtain
$
\mathbb{P}\bigl(\cup_s{A}^{n,s,i}_2\bigr)\rightarrow 0\,.
$

Finally we analyze ${A}^{n,s}_3$. We have
\begin{align*}
\bar{\Ps}^{n,s}_N({\tau}_1^{n,s})&\leq 1-\sum_{i=1}^{N-1}\bar{\Ps}^{n,s}_i({\tau}_1^{n,s})
\leq \rho_N+\frac{\epsilon}{4}\,.
\end{align*}
Thus by the way $A^{n,s}_3$ is defined,
we have $\bar{\Ps}^{n,s}_N({\tau}_1^{n,s})\leq \rho_N-\epsilon$.
And so there exists ${\sigma}^{n,s}_2$ such that
\begin{align}
\bar{\Ps}^{n,s}_N({\sigma}^{n,s}_2)>\rho_N-\frac{\epsilon}{2}\,\, \mbox{and on} \,\,
[{\sigma}^{n,s}_2,{\tau}_1^{n,s}], \,\bar{\Ps}^{n,s}_N(t)<\rho_N-\frac{\epsilon}{4}\,.
\end{align}
Hence on $[{\sigma}^{n,s}_2,{\tau}_1^{n,s}]$, we have
$\sum\bar{\Ps}^{n,s}_i(t)<\sum \rho_i +\frac{\epsilon}{4}-\frac{\epsilon}{4}=1$.
Thus on this interval we have $1\cdot\hat{Q}^{n,s}=0$, and so, the argument provided in the
last part of the proof of Lemma \ref{implem1} shows
$
\mathbb{P}\bigl(\cup_s{A}^{n,s}_3\bigr)\rightarrow 0\,.
$

We have thus shown that $\mathbb{P}({A}^n)\rightarrow 0$.
The conclusion of item (i) now follows on using again the fact that
$\min_j\hat Q^{n,s}_j\ge N^{-1}1\cdot\hat Q^{n,s}-\eps$ implies
$\max_j\hat Q^{n,s}_j\le N^{-1}1\cdot\hat Q^{n,s}+N\eps$.

As for item (ii), recall that $\theta_N<\theta_i$ for all $i<M=N$.
Hence the assertion is a direct consequence of \eqref{36} and item (i).
\end{proof}

Next, consider $M\in\{1,2,\ldots,N\}$. Fix a sequence $k_n$, $n\in\N$, such that
$\lim n^{-1/2}k_n=\iy$ and $\lim n^{-1}k_n=0$. Given $T<\iy$, define
\[
T_{n,s}=\inf\{t:1\cdot R^{n,s}(t)\ge k_n\}\w T.
\]
We use the notation $U^{*, n,s}=U^{n,s}(\cdot\w T_{n,s})$ for any process $U^{n,s}$,
and refer to these processes as {\it stopped versions} of the original processes.
The following result states that Lemma \ref{implem3} is valid for the stopped processes.
\begin{lem}
  \label{lem1}
Consider general $M$.\\
i. For $i=1,2,\ldots,N$ we have
\begin{align*}
&\sup_s\|\hat{Q}^{*,n,s}_i-N^{-1}(1\cdot\hat{X}^{*,n,s})^+\|_T\to 0, \text{ in probability,
as } n\rightarrow \infty,\\
&\sup_s\|\bar{\Ps}^{*,n,s}_i-\rho_i\|_T \to 0, \text{ in probability,
as } n\rightarrow \infty\,.
\end{align*}
ii. For $i=1,2,\ldots,M-1$, $\sup_s\hat R^{*,n,s}_i(T)\to0$, in probability, as $n\to\iy$.
\end{lem}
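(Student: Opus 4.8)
The plan is to repeat the proof of Lemma \ref{implem3} almost verbatim, with two modifications: every process is replaced by its stopped version, and wherever the original argument used the fact---valid only when $M=N$---that at most one customer reneges on a given interval, we instead invoke the bound on the \emph{total} reneging count available up to $T_{n,s}$. The elementary observation driving this is that for $t\le T_{n,s}$ one has $1\cdot R^{n,s}(t)\le k_n$, hence $1\cdot\bar R^{n,s}(t)\le n^{-1}k_n\to0$: although the individual diffusion-scaled reneging counts $\hat R^{n,s}_i$, $i\ge M$, need no longer be negligible, they remain negligible at \emph{fluid} scale, which is all that the estimates in Lemma \ref{implem3} actually use.

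Concretely, I would define $\tau^{n,s}_1$ exactly as in the proof of Lemma \ref{implem3}, but in terms of the stopped processes $\hat Q^{*,n,s}$ and $\bar\Ps^{*,n,s}$. Since a stopped process is constant after $T_{n,s}$, either a violation of the state-space-collapse bound or of one of the service-rate bounds occurs at some time in $[0,T_{n,s}]$, in which case $\tau^{n,s}_1\le T_{n,s}$, or no such violation ever occurs and $\tau^{n,s}_1=\iy$. Set $A^{n,s}=\{\tau^{n,s}_1\le T\}$ and $A^n=\cup_sA^{n,s}$, so that on $A^{n,s}$ one has $\tau^{n,s}_1\le T_{n,s}\w T$; decompose $A^{n,s}$ into the analogs of $A^{n,s,i}_1$, $A^{n,s,i}_2$ and $A^{n,s}_3$. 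Each interval produced by the argument is then contained in $[0,\tau^{n,s}_1]\subseteq[0,T_{n,s}]$, so on it the non-idling identities (such as $1\cdot\Ps^{n,s}=n$ when the queues are not all empty, and equality of the increments of $1\cdot B^{n,s}$ and $1\cdot D^{n,s}$) as well as the fluid-scale bound $1\cdot\bar R^{n,s}\le n^{-1}k_n$ are available.

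With this in place, the three events are disposed of as before. For $A^{n,s,i}_1$ the reneging terms are simply dropped as nonnegative quantities, exactly as in \eqref{26} and \eqref{28}, so the argument of Lemma \ref{implem} following \eqref{26} applies word for word, giving $\PP(\cup_sA^{n,s,i}_1)\to0$. For $A^{n,s,i}_2$, I would run the same Gronwall estimate based on \eqref{73}, except that the term $\bar R^{n,s}_i(t)$ is bounded by $n^{-1}k_n$ when $M\le i\le N-1$ (and by $n^{-1}$ when $i<M$, since, as in Lemma \ref{implem3}, state space collapse together with $\theta_i>\theta_N$ yields $\max_j\hat Q^{*,n,s}_j\le\theta_N+(N+1)\eps<\theta_i$ for $\eps$ small, so a class-$i$ customer following \eqref{12} never reneges); in either case the right-hand side tends to $0$, contradicting $|\bar\Ps^{*,n,s}_i(\tau^{n,s}_1)-\rho_i|\ge\eps_1$, whence $\PP(\cup_sA^{n,s,i}_2)\to0$. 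For $A^{n,s}_3$ no change is needed at all: on the subinterval on which $1\cdot\bar\Ps^{*,n,s}<1$ the non-idling condition forces all queues to be empty, and when all queues are empty every rule-following customer joins, so there is again at most one reneging (the deviator) on that subinterval, and the estimate from the last part of the proof of Lemma \ref{implem1} carries over unchanged. Combining the three estimates gives $\PP(A^n)\to0$; item (i) then follows from $(1\cdot\hat X^{*,n,s})^+=1\cdot\hat Q^{*,n,s}$ and the implication $\min_j\hat Q^{*,n,s}_j\ge N^{-1}1\cdot\hat Q^{*,n,s}-\eps\ \Rightarrow\ \max_j\hat Q^{*,n,s}_j\le N^{-1}1\cdot\hat Q^{*,n,s}+(N-1)\eps$, and item (ii) is immediate, since for $i<M$ we have $\theta_i>\theta_N$, so on $(A^n)^c$ no class-$i$ customer following \eqref{12} reneges up to $T_{n,s}\w T$ and thus $\hat R^{*,n,s}_i(T)\le n^{-1/2}$.

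The work here is more bookkeeping than conceptual: the one point that needs care is to fix the small parameters in the right order---$\eps$, then $\eps_1$, relative to a choice of $k_n$ satisfying $n^{-1}k_n\to0$ (the other property $n^{-1/2}k_n\to\iy$ being needed only later)---so that all error terms are genuinely $o(1)$ simultaneously over all scenarios $s$, and to verify, as indicated above, that every interval appearing in the argument lies in $[0,T_{n,s}]$, which is what makes the fluid-scale reneging bound and the non-idling identities applicable throughout.
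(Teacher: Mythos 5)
Your proposal is correct and follows essentially the same route as the paper, whose proof consists precisely of the observation you make central---that $\bar R^{*,n,s}\le n^{-1}(k_n+1)=e^{n,s}$ by the definition of $T_{n,s}$---followed by the remark that the rest goes through as in Lemma \ref{implem3}. The only (cosmetic) difference is that the paper first extracts the second part of item (i) directly from \eqref{73} and Gronwall, outside any event decomposition, since $\bar R^{*,n,s}$ and $\bar Q^{n,s}$ are unconditionally negligible at fluid scale, whereas you keep all three conditions bundled in the stopping time $\tau^{n,s}_1$; both versions are sound.
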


\begin{proof}
Note that, by definition, $\bar R^{*,n,s}=e^{n,s}$. Hence a use of \eqref{73}
and again Gronwall's lemma immediately give $\bar\Ps^{*,n,s}=\rho+e^{n,s}$, proving the
second part of item (i) on the lemma.
With this at hand, the remaining assertions are proved as in Lemma \ref{implem3}.
\end{proof}

In the case where $M=N$, we provide a convergence result.
We do not attempt such an analysis for $M<N$, where, as is shown in a work
in progress \cite{ata-sah-2}, the limiting behavior may depend on properties
that are finer than first and second order data. Thus, for $M<N$,
we only obtain $C$-tightness of the processes, that however will suffice for the purpose
of proving the main result.

In order to present the result regarding the case $M=N$,
we consider an SDE of the form \eqref{61} with different domain $G$ and drift $b$.
Namely, we consider
\[
G=\{y\in\R^N:1\cdot y\le N\theta_N\},
\]
and $b:\mathbb{R}^N\rightarrow \mathbb{R}^N$ given by
\begin{equation}\label{65}
b(y)=-(\mu_1(y_1-N^{-1}(1\cdot y)^+),\ldots,\mu_N(y_N-N^{-1}(1\cdot y)^+)).
\end{equation}
The process $W(t)$ is as in Section \ref{sec3}, and the SDE of interest is now
\begin{align}\label{63}
&X(t)=X_0+W(t)+\int_0^tb(X(u))du-L(t)\be_N,\qquad t\ge0,\\ \notag
&\int_{[0,\iy)}1_{\{1\cdot X(t)<N\theta_N\}}dL(t)=0\,,
\end{align}
where a solution $(X,L)$ is defined similarly.
The map $\Gam:\mathbb{D}_{\mathbb{R}^N}([0,T]) \rightarrow \mathbb{D}_{\mathbb{R}^N}([0,T])$
that is relevant for the present setting is given by
$$
\Gam(f)(t)=f(t)-g(t)\be_N\,,
\qquad
g(t)=\sup_{0\leq u\leq t}(N\theta_N-(1\cdot f(u)))^-\,.
$$

\begin{prop}\label{prop2}
i. For general $M$, the processes
$\hat W^{n,s}$, $\hat X^{n,s}$, $\hat R^{n,s}$, $\hat Q^{n,s}$ and $\hat\Ps^{n,s}$
are $C$-tight, uniformly in $s$.

ii. In the case $M=N$, as $n\to\iy$,
$(\hat W^{n,0},\hat X^{n,0},\hat R^{n,0},\hat Q^{n,0},\hat\Ps^{n,0})$
converges in distribution to $(W,X,L\be_N,Q,\Ps)$, where $(X,L)$ form the solution to the SDE \eqref{63}, and
\[
Q=N^{-1}(1\cdot X)^+\sum_{i=1}^N\be_i,
\qquad
\Ps=X-Q.
\]
\end{prop}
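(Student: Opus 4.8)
The plan is to mirror the proof of Proposition \ref{prop1}, replacing the estimates of Lemmas \ref{implem} and \ref{implem1} by the SLQ state-space-collapse estimates --- Lemma \ref{implem3} when $M=N$, Lemma \ref{lem1} in general --- and replacing the Skorohod-problem (SP) data $(G,b,\Gam)$ by the ones displayed above \eqref{63}. I will treat part (ii) and the case $M=N$ of part (i) together, and deduce the case $M<N$ of part (i) from the stopped processes.

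\emph{The case $M=N$.} Uniform $C$-tightness of $\hat W^{n,s}$ follows from \eqref{51}, \eqref{fclt} and $\bar\Ps^{n,s}_i\le1$, as before. Writing the diffusion-scaled balance equations as $\hat X^{n,s}_i=\hat X^n_i(0)+\hat W^{n,s}_i-\mu_i\int_0^\cdot\hat\Ps^{n,s}_i(u)\,du-\hat R^{n,s}_i$ and substituting, from Lemma \ref{implem3}, the collapse $\hat Q^{n,s}_i=N^{-1}(1\cdot\hat X^{n,s})^++e^{n,s}$ (all $i$) and $\hat R^{n,s}_i=e^{n,s}$ ($i\le N-1$), one gets $\hat X^{n,s}=\hat X^n(0)+\hat W^{n,s}+\int_0^\cdot b(\hat X^{n,s}(u))\,du-\hat R^{n,s}_N\be_N+e^{n,s}$ with $b$ as in \eqref{65}; since moreover $\hat Q^{n,s}_N\le\theta_N+2n^{-1/2}$ by \eqref{36}, the $i=N$ collapse gives $1\cdot\hat X^{n,s}\le N\theta_N+e^{n,s}$. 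Setting $F^{n,s}=(1\cdot\hat X^{n,s})\w N\theta_N-1\cdot\hat X^{n,s}$ (whence $|F^{n,s}|\le|e^{n,s}|$) and $\tilde Y^{n,s}_i=\hat X^{n,s}_i+N^{-1}F^{n,s}$, the process $\tilde Y^{n,s}$ lies in $G=\{y:1\cdot y\le N\theta_N\}$ and, $b$ being Lipschitz, satisfies an equation of the same form with $\hat R^{n,s}_N$ replaced by the modified count $\tilde R^{n,s}_N$ of \eqref{59}; as in Proposition \ref{prop1}, the non-idling relation gives that $1\cdot\tilde Y^{n,s}<N\theta_N$ forces $\hat Q^{n,s}_N<\theta_N$, so $\tilde R^{n,s}_N$ does not increase there. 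Hence $(\tilde Y^{n,s},\tilde R^{n,s}_N)$ solves the SP for $\hat X^n(0)+\hat W^{n,s}+\int_0^\cdot b(\tilde Y^{n,s}(u))\,du+e^{n,s}$, i.e.\ $\tilde Y^{n,s}=\Gam(\cdot)$ and $\tilde R^{n,s}_N\be_N=(I-\Gam)(\cdot)$ of this data, and Gronwall's lemma through \eqref{60}, followed by \eqref{62}, yields the uniform $C$-tightness of $\tilde Y^{n,s}$ and $\tilde R^{n,s}_N$. Since $\hat X^{n,s}=\tilde Y^{n,s}+e^{n,s}$ and $\hat R^{n,s}_i=e^{n,s}$ for $i\le N-1$, part (i) follows in this case, the state-space collapse transferring it to $\hat Q^{n,s}$ and $\hat\Ps^{n,s}$.

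\emph{Part (ii).} One first checks $\hat W^{n,0}\To W$: by \eqref{fclt}, $\hat E^n\To W_1$ and $\hat S^n\To W_2$; $n^{-1/2}(\lambda^n_i-n\lambda_i)\to\hat\lambda_i$; and $\int_0^t\bar\Ps^{n,0}_i(u)\,du\to\rho_it$ by Lemma \ref{implem3}, so $\hat S^n_i(\int_0^\cdot\bar\Ps^{n,0}_i(u)\,du)$ converges to a BM of infinitesimal variance $\mu_i\rho_i=\lambda_i$, which with independence of the coordinates yields the drift and covariance $A$ of $W$ as stated. Passing to a subsequential weak limit in $\tilde Y^{n,0}=\Gam(\cdot)$ and $\tilde R^{n,0}_N\be_N=(I-\Gam)(\cdot)$, using continuity of $\Gam$ and uniqueness of the solution of \eqref{63}, identifies the limit of $(\hat W^{n,0},\tilde Y^{n,0},\tilde R^{n,0}_N)$ as $(W,X,L)$. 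Since $\hat X^{n,0}=\tilde Y^{n,0}+e^{n,0}$, $\hat R^{n,0}_i=e^{n,0}$ for $i\le N-1$, $\hat Q^{n,0}_i=N^{-1}(1\cdot\hat X^{n,0})^++e^{n,0}$ and $\hat\Ps^{n,0}=\hat X^{n,0}-\hat Q^{n,0}$, the continuous mapping theorem delivers the asserted joint convergence.

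\emph{The case $M<N$ of part (i), and the main obstacle.} Here one runs the argument above for the \emph{stopped} processes, for which Lemma \ref{lem1} supplies the state-space collapse and $\hat R^{*,n,s}_i=e^{n,s}$ for $i\le M-1$; the stopping is then removed by observing that uniform $C$-tightness of $\hat R^{*,n,s}$ makes $\{1\cdot R^{*,n,s}(T)/\sqrt n\}$ tight uniformly in $s$, so, as $n^{-1/2}k_n\to\iy$, $\PP(\cup_s\{T_{n,s}<T\})\to0$, and on the complement the stopped and unstopped processes coincide. The point requiring care is that, for $M<N$, classes $M,\ldots,N$ share the minimal threshold $\theta_N$, so near the boundary $1\cdot\hat X^{*,n,s}\approx N\theta_N$ the reneging is not carried by a single class and no SP with a fixed reflection direction is available. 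The remedy is to control the \emph{total} reneging $1\cdot\hat R^{*,n,s}$ by a scalar reflection argument for $1\cdot\hat X^{*,n,s}$ (which, up to $e^{n,s}$, stays below $N\theta_N$, the pushing increasing only when $1\cdot\hat X^{*,n,s}$ is within a vanishing distance of $N\theta_N$), and then to dominate each $\hat R^{*,n,s}_i$ by the total via the elementary fact that if nonnegative nondecreasing functions sum to a nondecreasing $g$, each of their increments over an interval is at most the corresponding increment of $g$; this transfers the modulus-of-continuity bound from $1\cdot\hat R^{*,n,s}$ to each $\hat R^{*,n,s}_i$, and a Gronwall estimate on the vector equation (carried out over subintervals of $[0,T]$ of fixed small length) closes the uniform $C$-tightness.
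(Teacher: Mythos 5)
Your overall architecture coincides with the paper's: state-space collapse from Lemma \ref{implem3} (resp.\ Lemma \ref{lem1} for the stopped processes), a shift of $\hat X^{n,s}$ into the domain $G$, an exact Skorohod-problem representation via $\Gam$, Gronwall, and, for $M<N$, a scalar reflection bound on the aggregate reneging followed by the monotonicity argument that distributes $C$-tightness to the individual components and removes the stopping. The one step that fails as written is the complementarity condition in the case $M=N$. With your choice $\tilde Y^{n,s}_i=\hat X^{n,s}_i+N^{-1}F^{n,s}$, $F^{n,s}=(1\cdot\hat X^{n,s})\w N\theta_N-1\cdot\hat X^{n,s}$, one has $1\cdot\tilde Y^{n,s}=(1\cdot\hat X^{n,s})\w N\theta_N$, and when the queues are nonempty the non-idling condition only gives $1\cdot\hat X^{n,s}=1\cdot\hat Q^{n,s}$; the implication ``$1\cdot\tilde Y^{n,s}<N\theta_N$ forces $\hat Q^{n,s}_N<\theta_N$'' would therefore require $\hat Q^{n,s}_N\le N^{-1}(1\cdot\hat Q^{n,s})$ pathwise, which SLQ does not guarantee: the collapse holds only up to an $e^{n,s}$ error, not $\om$-by-$\om$. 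Since class-$N$ reneging is triggered by $\hat Q^{n,s}_N$ exceeding $\theta_N$ and not by the average queue doing so, your relation $\int 1_{\{1\cdot\tilde Y^{n,s}<N\theta_N\}}d\tilde R^{n,s}_N=0$ can fail with positive probability, and without exact complementarity the identity $\tilde Y^{n,s}=\Gam(\cdot)$ --- on which both the Gronwall bound and the identification of the limit rest --- is not available.

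The paper avoids this by building the comparison process out of $\hat Q^{n,s}_N$ itself: it sets $Z^{n,s}_i=\hat X^{n,s}_i+\hat Q^{n,s}_N-N^{-1}(1\cdot\hat X^{n,s})^+$, so that $N^{-1}(1\cdot Z^{n,s})=N^{-1}(1\cdot\hat\Ps^{n,s})+\hat Q^{n,s}_N$, and then truncates at $\theta_N$. With this choice, $N^{-1}(1\cdot\tilde Z^{n,s})<\theta_N$ does force $\hat Q^{n,s}_N<\theta_N$ exactly (either $\hat Q^{n,s}_N=0$, or else $1\cdot\hat\Ps^{n,s}=0$ by non-idling), while the added term is $e^{n,s}$ by the collapse, so $\tilde Z^{n,s}=\hat X^{n,s}+e^{n,s}$ still holds and the drift $b$ is unaffected up to $e^{n,s}$. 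Your argument goes through verbatim once $\tilde Y^{n,s}$ is replaced by this $\tilde Z^{n,s}$; everything else --- the identification of $W$, the $M<N$ scalar-reflection treatment of $\sum_{i\ge M}\hat R^{n,s}_i$ with the increment-domination step, and the removal of the stopping via tightness of the total reneging at time $T$ together with $n^{-1/2}k_n\to\iy$ --- matches the paper's proof.
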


\begin{proof}
{\it Step 1.}
In this and the next step we consider the case $M=N$.
We have
\begin{align*}
\hat{X}^{n,s}_i&=\hat{X}^n_i(0)+\hat{W}^{n,s}_i
-\mu_i\int_0^\cdot\hat{\Ps}^{n,s}_i(u)du-\hat{R}^{n,s}_i\\
&=\hat{X}^n_i(0)+\hat{W}^{n,s}_i-
\mu_i\int_0^\cdot(\hat{X}^{n,s}_i(u)-\hat{Q}^{n,s}_i(u))du-\hat{R}^{n,s}_i\\
&=\hat{X}^n_i(0)+\hat{W}^{n,s}_i-
\mu_i\int_0^t(\hat{X}^{n,s}_i(u)
-N^{-1}(1\cdot\hat{X}^{n,s}(u))^+)du-\hat{R}^{n,s}_i+e^{n,s}_i\,,
\end{align*}
where we have used Lemma \ref{implem3}(i)
on the last line. Next, by Lemma \ref{implem3}(ii),
\begin{align}\label{67}
\hat{X}^{n,s}=\hat X^n(0)+\hat{W}^{n,s}
+\int_0^\cdot{b}(\hat{X}^{n,s}(u))du-\hat{R}^{n,s}_N\be_N+e^{n,s}\,,
\end{align}
with $b$ as in \eqref{65}.
Define
$$
{Z}^{n,s}_i=\hat{X}^{n,s}_i+\hat{Q}^{n,s}_N-N^{-1}(1\cdot \hat{X}^{n,s})^+,
\qquad i=1,\ldots,N,
$$
and note that $Z^{n,s}=\hat X^{n,s}+e^{n,s}$. Let
$$
K^{n,s}=[N^{-1}(1\cdot{Z}^{n,s})]\wedge \theta_N -N^{-1}(1\cdot{Z}^{n,s})\,.
$$
Since
\begin{align*}
N^{-1}(1\cdot {Z}^{n,s})&=N^{-1}(1\cdot
\hat{X}^{n,s})+\hat{Q}^{n,s}_N-N^{-1}(1\cdot \hat{X}^{n,s})^+\\
&=N^{-1}(1\cdot \hat{\Ps}^{n,s})+\hat{Q}^{n,s}_N\\
&\leq \hat{Q}^{n,s}_N\leq \theta_N + 2n^{-1/2}\,,
\end{align*}
we have $K^{n,s}=e^{n,s}$. Define
$\tilde{Z}^{n,s}_i={Z}^{n,s}_i+K^{n,s}$, $i=1,2,\ldots,N$.
Then
\begin{equation}
  \label{66}
  N^{-1}(1\cdot\tilde Z^{n,s})(t)\le\theta_N,\qquad t\ge0.
\end{equation}
Moreover, $\tilde Z^{n,s}=\hat X^{n,s}+e^{n,s}$, hence
by the Lipschitz property of $b$ and \eqref{67},
\begin{align}\label{68}
\tilde{Z}^{n,s}=\hat X^n(0)+\hat{W}^{n,s}+\int_0^\cdot{b}(\tilde{Z}^{n,s}(u))du
-\hat{R}^{n,s}_N\be_N+e^{n,s}\,.
\end{align}
As in the case of FP, an argument based on the fact that
under the reference scenario no class-$i$ reneging occurs when $\hat Q^{n,0}_i<\theta_i$
shows that
\begin{equation}\label{69}
\int 1_{\{N^{-1}(1\cdot\tilde{Z}^{n,s}) < \theta_N\}}d\tilde{R}^{n,s}_N = 0,
\end{equation}
for a nonnegative, nondecreasing process $\tilde R^{n,s}_N$ that is close to $\hat R^{n,s}_N$
in the sense
\begin{equation}\label{70}
\tilde{R}^{n,s}_N=\hat{R}^{n,s}_N+e^{n,s}.
\end{equation}

{\it Step 2.}
To prove (i) (with $M=N$) and (ii),
combine \eqref{66}, \eqref{68} (with $\hat R^{n,s}_N$
replaced by $\tilde R^{n,s}_N$) and \eqref{69} to write
\begin{align}
\tilde{Z}^{n,s}&={\Gam}\Big(\hat X^n(0)+\hat{W}^{n,s}
+\int_0^{\cdot}{b}(\tilde{Z}^{n,s}(u))du+e^{n,s}\Big),\\
\tilde{R}^{n,s}\be_N &= (I - {\Gam})\Big(\hat X^n(0)+\hat{W}^{n,s}
+\int_0^{\cdot}b(\tilde{Z}^{n,s}(u))du+e^{n,s}\Big).
\end{align}
The completion of the proof, based on the above, is precisely as in Proposition \ref{prop1}.

{\it Step 3.} It remains to prove (i) for $M<N$. We start by arguing that
conclusions analogous to those obtained in Step 1 are valid here too, but for the
stopped processes. Indeed, working as in Step 1 with Lemma \ref{lem1}
in place of Lemma \ref{implem3} shows that

\begin{align*}
\hat{X}^{*,n,s}=\hat X^n(0)+\hat{W}^{*,n,s}
+\int_0^{\cdot\w T_{n,s}}{b}(\hat{X}^{n,s}(u))du-\sum_{i=M}^N\hat{R}^{*,n,s}_i\be_i+e^{n,s}\,,
\end{align*}
\begin{align}\label{68+}
\tilde{Z}^{*,n,s}=\hat X^n(0)+\hat{W}^{*,n,s}+\int_0^{\cdot\w T_{n,s}}{b}(\tilde{Z}^{n,s}(u))du
-\sum_{i=M}^N\hat{R}^{*,n,s}_i\be_i+e^{n,s}\,,
\end{align}
\begin{equation}
  \label{74}
  \tilde Z^{*,n,s}=\hat X^{*,n,s}+e^{n,s},
\end{equation}
\begin{equation}\label{69+}
\int 1_{\{N^{-1}(1\cdot\tilde{Z}^{n,s}) < \theta_N\}}d\tilde{R}^{n,s}_i = 0,
\qquad i=M,\ldots,N,
\end{equation}
for nonnegative, nondecreasing processes $\tilde R^{n,s}_i$ that are close to $\hat R^{n,s}_i$
in the sense
\begin{equation}\label{70+}
\tilde{R}^{n,s}_i=\hat{R}^{n,s}_i+e^{n,s},\qquad i=M,\ldots,N,
\end{equation}
(note that the above refers to the unstopped versions of the processes,
because again \eqref{59} is valid).

Denote
\begin{equation}\label{71}
\zeta^{n,s}=1\cdot\tilde Z^{n,s},
\qquad
\xi^{n,s}=1\cdot\hat X^{n}(0)+1\cdot\hat W^{n,s}+\int_0^\cdot1\cdot b(\tilde Z^{n,s}(u))du,
\qquad
\rho^{n,s}=\sum_{i=M}^N\tilde R^{n,s}_i.
\end{equation}
Then $\xi^{n,s}$ and $\rho^{n,s}$ have sample paths in $\mathbb{D}_{\R}$,
where those of $\rho^{n,s}$ are nonnegative and nondecreasing, and moreover,
as follows from \eqref{66}, \eqref{68+}, \eqref{69+} and \eqref{70+},
\[
\zeta^{*,n,s}=\xi^{*,n,s}+e^{n,s}-\rho^{*,n,s}\le N\theta_N,
\qquad
\int_{[0,\iy)}1_{\{\zeta^{n,s}<N\theta_N\}}d\rho^{n,s}=0.
\]
It follows that $\rho^{*,n,s}$ is given by
\begin{equation}\label{72}
\rho^{*,n,s}(t)=\sup_{0\le u\le t}(N\theta_N-\xi^{*,n,s}(u)+e^{n,s}(u))^-.
\end{equation}
We now write $c$ for generic constants and use the Lipschitz property of $b$. We have
\begin{align*}
  \rho^{*,n,s}(t) &\le c+\|\xi^{*,n,s}\|_t+e^{n,s}(t)\\
  &\le c+\|\hat X^n(0)\|+c\|\hat W^{*,n,s}\|_t
  +c\int_0^{t\w T_{n,s}}\|\tilde Z^{n,s}(u)\|du+e^{n,s}(t).
\end{align*}
Going back to \eqref{68+}
and recalling that $\rho^{n,s}$ has been defined as the sum of positive terms,
\[
\|\tilde Z^{*,n,s}(t)\|\le c\|\hat X^n(0)\|+c\|\hat W^{*,n,s}\|_t
+c\int_0^t\|\tilde Z^{*,n,s}(u)\|du+e^{n,s}(t).
\]
A use of Gronwall's lemma now shows that for $T$ fixed, $\|\tilde Z^{*,n,s}\|_T$, $n\in\N$,
are tight, uniformly in $s$.
Next, using \eqref{71} and the $C$-tightness of $\hat W^{*,n,s}$ shows that $\xi^{*,n,s}$
are $C$-tight, uniformly in $s$. In turn, using \eqref{72}, shows that so are the processes
$\rho^{*,n,s}$. In particular, for fixed $T$,
\begin{equation}
  \label{75}
  \rho^{*,n,s}(T) \text{ are tight uniformly in $s$.}
\end{equation}

Now, note that
\[
1\cdot\hat R^{*,n,s}(T)=\sum_{i=1}^{M-1}\hat R^{*,n,s}_i(T)+\rho^{*,n,s}(T) +e^{n,s}
=\rho^{*,n,s}(T) +e^{n,s},
\]
where we used Lemma \ref{lem1}(ii).
Hence in view of \eqref{75}, the definition of $T_{n,s}$, and the assumption
$\lim n^{-1/2}k_n=\iy$,
we have $\PP(\text{for some $s$, } T_{n,s}<T)\to0$ as $n\to\iy$. Thus all conclusions we have obtained
for the stopped processes are valid for the unstopped versions. Namely,
$\|\tilde Z^{n,s}\|_T$ are tight, uniformly in $s$,
$\xi^{n,s}$ and $\rho^{n,s}$ are $C$-tight uniformly in $s$, and \eqref{68+}
and \eqref{74} hold without the asterisk sign.

Using the last part of \eqref{71} and the fact that each of the processes
$\tilde R^{n,s}_i$, $n\in\N$, $i=M,\ldots,N$,
is nondecreasing shows that these processes are also $C$-tight, uniformly in $s$.
Hence by \eqref{68+}, $\tilde Z^{n,s}$, and in turn,
$\hat X^{n,s}$ are $C$-tight, uniformly in $s$.
Finally, Lemma \ref{lem1} is now valid for the processes without the asterisk sign.
Thus the uniform $C$-tightness of $\hat Q^{n,s}$ follows from that
of $\hat X^{n,s}$ upon using Lemma \ref{lem1}(i) and the continuous mapping theorem,
and that of $\hat\Ps^{n,s}$ follows from the identity \eqref{43}.
\end{proof}

\section{\textbf{Reiman's Snapshot Principle and Proof of Main Result}}\label{sec5}

We finally state and prove RSP and obtain the main result as an immediate
consequence thereof. RSP is based on the $C$-tightness of the processes
$B^{n,s}$, established as part of the limit results above.
The two policies, namely FP and SLQ, are addressed here
simultaneously.

The proof uses the following identity, that holds regardless of the service policy,
\begin{align}\label{47}
\hat{Q}^{n,s}_i(\JT_i^{n,s}(t))=\hat{B}_i^{n,s}(\JT_i^{n,s}(t)
+\WT^{n,s}_i(t))-\hat{B}^{n,s}_i(\JT_i^{n,s}(t))+\lambda_i\wh\WT^{n,s}_i(t)\,,
\end{align}
and on properties of the processes involved in it.
This identity follows from \eqref{31}, and the definition of the scaled processes,
\eqref{48} and \eqref{15}. The main argument is that the l.h.s.\ and the last term on the
r.h.s.\ must be asymptotically equal once one has that
$\hat B^{n,s}$ are uniformly $C$-tight and the term $\hat\WT^{n,s}$ is small.

\begin{prop}\label{Snapshot}
We have for $i=1,\ldots,N$,
\begin{align}\label{49}
\gamma^n_i(T):=
\sup_s\sup_{t \in [0,T]}|\hat{Q}^{n,s}_i(\JT_i^{n,s}(t))-\lambda_i\wh\WT^{n,s}_i(t)|\rightarrow 0\,\,\mbox{in probability, as}\,\,n\rightarrow \infty\,.
\end{align}
\end{prop}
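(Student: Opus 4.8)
The plan is to deduce everything from the identity \eqref{47}, which expresses the quantity to be bounded as
\[
\gamma^n_i(T)=\sup_s\sup_{t\in[0,T]}\bigl|\hat B^{n,s}_i(\JT^{n,s}_i(t)+\WT^{n,s}_i(t))-\hat B^{n,s}_i(\JT^{n,s}_i(t))\bigr|,
\]
i.e.\ as an oscillation of $\hat B^{n,s}_i$ over the interval $[\JT^{n,s}_i(t),\RT^{n,s}_i(t)]$, whose length is $\WT^{n,s}_i(t)$. Fix the horizon $T':=T+1$. The proof rests on two facts: (i) the processes $\hat B^{n,s}_i$ are $C$-tight, uniformly in $s$; and (ii) $\Delta^n:=\sup_s\sup_{t\le T}\bigl(\WT^{n,s}_i(t)\vee(\JT^{n,s}_i(t)-t)\bigr)\to0$ in probability. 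Granting these, on the event $\{\Delta^n<1/2\}$ — of probability tending to $1$ by (ii) — every interval $[\JT^{n,s}_i(t),\RT^{n,s}_i(t)]$ with $t\le T$ lies in $[0,T']$ and has length $\le\Delta^n$, so $\gamma^n_i(T)\le\sup_s w_{T'}(\hat B^{n,s}_i,\Delta^n)$, and the right-hand side tends to $0$ in probability by (i) and condition C2$'$. Fact (i) is a byproduct of Sections \ref{sec3}--\ref{sec4}: rearranging \eqref{41} and using \eqref{48} gives $\hat B^{n,s}_i=\hat Q^n_i(0)+\hat E^n_i+n^{-1/2}(\lambda^n_i-n\lambda_i)(\cdot)-\hat Q^{n,s}_i-\hat R^{n,s}_i$, in which $\hat Q^n_i(0)$ (by \eqref{22}), $\hat E^n_i$ (by \eqref{fclt}) and the deterministic Lipschitz term are $C$-tight, while $\hat Q^{n,s}_i$ and $\hat R^{n,s}_i$ are $C$-tight uniformly in $s$ by Proposition \ref{prop1} (FP) or Proposition \ref{prop2}(i) (SLQ, any $M$).

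The heart of the matter is (ii). Besides (i), the only extra input is the a priori bound $Q^{n,s}_i\le\sqrt n\,\theta_i+2$, valid for all $t$ and all scenarios by \eqref{36}, together with the balance relation $Q^{n,s}_i=Q^n_i(0)+J^{n,s}_i-B^{n,s}_i$, which follows from \eqref{40} and \eqref{queuelength}. Set $\beta^n:=\sup_s\|\hat B^{n,s}_i\|_{T'}$, tight by (i); by \eqref{48}, $B^{n,s}_i(u)=n\lambda_i u+\sqrt n\,\hat B^{n,s}_i(u)$, so for any $[a,b]\subseteq[0,T']$ one has the unconditional bound $B^{n,s}_i(b)-B^{n,s}_i(a)\ge n\lambda_i(b-a)-2\sqrt n\,\beta^n$. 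Fix $\delta\in(0,1)$. If $\JT^{n,s}_i(t)-t\ge\delta$ for some scenario $s$ and some $t\le T$, then by \eqref{16} the process $J^{n,s}_i$ is constant on $[t,t+\delta/2]$, hence $B^{n,s}_i(t+\delta/2)-B^{n,s}_i(t)=Q^{n,s}_i(t)-Q^{n,s}_i(t+\delta/2)\le\sqrt n\,\theta_i+2$, which together with the lower bound forces $n\lambda_i\delta/2\le\sqrt n(\theta_i+2\beta^n)+2$, an event of vanishing probability; thus $\sup_s\sup_{t\le T}(\JT^{n,s}_i(t)-t)\to0$. With this in hand, if moreover $\WT^{n,s}_i(t)\ge\delta$ for some $s$ and some $t\le T$, then (w.h.p.) $[\JT^{n,s}_i(t),\JT^{n,s}_i(t)+\delta/2]\subseteq[0,T']$, while \eqref{17} and the monotonicity of $B^{n,s}_i$ give $B^{n,s}_i(\JT^{n,s}_i(t)+\delta/2)-B^{n,s}_i(\JT^{n,s}_i(t))<Q^{n,s}_i(\JT^{n,s}_i(t))\le\sqrt n\,\theta_i+2$, again contradicting the lower bound once $n$ is large. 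As $\delta>0$ is arbitrary, (ii) follows. (For SLQ with $M<N$ the same argument applies, with Proposition \ref{prop2}(i) and Lemma \ref{lem1} replacing the $M=N$ statements; only the uniform $C$-tightness of $\hat Q^{n,s}_i$, $\hat R^{n,s}_i$, and hence of $\hat B^{n,s}_i$, is used.)

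The step I expect to need the most care is (ii). The underlying picture — a class-$i$ queue of length $O(\sqrt n)$ is cleared in $O(n^{-1/2})$ time, because customers are routed into service at the $\Theta(n)$ rate visible in $B^{n,s}_i(u)\approx n\lambda_i u$ — is elementary, but it must be run simultaneously over all scenarios (which is precisely why the estimates of Sections \ref{sec3}--\ref{sec4} were established in the ``uniformly in $s$'' form), and one must keep the test intervals inside the fixed horizon $[0,T']$, which is why the $\JT$-estimate is proved first and then fed into the $\WT$-estimate. Everything else is bookkeeping with \eqref{47} and the uniform $C$-tightness already in hand.
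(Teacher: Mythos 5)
Your proposal is correct and follows essentially the same route as the paper's proof: uniform $C$-tightness of $\hat B^{n,s}_i$ obtained from the balance equation together with Proposition \ref{prop1} (resp.\ Proposition \ref{prop2}), the two intermediate estimates $\sup_s\sup_{t\le T}(\JT^{n,s}_i(t)-t)\to0$ and $\sup_s\sup_{t\le T}\WT^{n,s}_i(t)\to0$, and finally the bound of $\gamma^n_i(T)$ by the modulus of continuity of $\hat B^{n,s}_i$ over a fixed horizon via \eqref{47}. The only (harmless) variation is in how the intermediate estimates are derived: you use the a priori bound \eqref{36} on $Q^{n,s}_i$ against the linear growth of $B^{n,s}_i$, whereas the paper proves \eqref{imprel6} from the tightness of $\hat J^{n,s}=\hat E^n-\hat R^{n,s}$ and reaches $\WT^{n,s}\to0$ in two steps (first $\WT\le1$ w.h.p., then tightness of $\sqrt n\,\WT^{n,s}$ via \eqref{47}).
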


\begin{proof}
First we argue that the results of Sections \ref{sec3} and \ref{sec4} imply that
$\hat B^{n,s}$ are $C$-tight, uniformly in $s$.
Indeed, by \eqref{41},
\[
\hat{B}^{n,s}_i(t)=\hat{Q}^n_i(0)+\hat{E}^n_i(t)+\hat\la_it-\hat{Q}^{n,s}_i(t)
-\hat{R}^{n,s}_i(t)+e^{n,s}(t).
\]
By \eqref{22} and \eqref{fclt}, the sum of the first two terms forms a $C$-tight sequence
of processes. By Proposition \ref{prop1}, $\hat Q^{n,s}_i$ and $\hat R^{n,s}_i$ are
$C$-tight, uniformly in $s$, under FP, and by Proposition \ref{prop2},
the same is true under SLQ. Thus follows the uniform $C$-tightness
of $\hat B^{n,s}$, and in particular, for $i=1,2,\ldots,N$ and $\eps>0$,
\begin{align}\label{64}
\lim_{\delta\downarrow 0}\limsup_{n\rightarrow \infty}
\mathbb{P}(\sup_s w_{T+2}(\hat{B}^{n,s}_i,\delta)>\epsilon)\rightarrow 0\,.
\end{align}

Fix $\eps\in(0,1)$ and define
\begin{align*}
\Om^{n,s}_i=\{\sup_{t \in [0,T]}|\JT_i^{n,s}(t)-t| > \epsilon\}\,.
\end{align*}
Then on $\Om^{n,s}_i$ there exists $t\in[0,T]$ such that
$J^{n,s}_i(t+ \epsilon)-J^{n,s}_i(t)=0$,
hence
\begin{align*}
&J^{n,s}_i(t+ \epsilon)-n\lambda_i(t+\epsilon)-[J^{n,s}_i(t)-n\lambda_i t]=-n\lambda_i \epsilon.
\end{align*}
Hence, on $\cup_s\Om^{n,s}_i$,
\begin{align*}
&\sup_s\sup_{0\leq t \leq T+1}|J^{n,s}_i(t)/n-\lambda_it|+\sup_s\sup_{0\leq t \leq T}|J^{n,s}_i(t)/n-\lambda_it|\geq\lambda_i \epsilon\,.
\end{align*}
By \eqref{40}, $J^{n,s}=E^n-R^{n,s}$, and therefore by the tightness
of $\|\hat{E}^{n}\|_{T+1}$ and $\|\hat{R}^{n,s}\|_{T+1}$, uniformly in $s$,
we have that
$$
\sup_s\sup_{0\leq t \leq T+1}\Big|\frac{J^{n,s}_i(t)-n\lambda_it}{\sqrt{n}}\Big|
$$
are tight. Hence
\begin{align}\label{imprel6}
\mathbb{P}(\sup_s\sup_{0\leq t \leq T}|\JT_i^{n,s}(t)-t| > \epsilon)\rightarrow 0,
\text{ as } n\to\iy.
\end{align}

Next we show
\begin{align}\label{imprel7}
\mathbb{P}(\sup_s\sup_{0\leq t \leq T} \WT^{n,s}_i(t)>1)\rightarrow 0\,,
\text{ as } n\to\iy.
\end{align}
For every $\omega$ in the event
under consideration there exist $t$  and $s$ such that $\WT^{n,s}_i(t)>1$. Therefore,
by \eqref{31},
\begin{align*}
Q^{n,s}_i(\JT_i^{n,s}(t))&=B_i^{n,s}(\JT_i^{n,s}(t)+\WT^{n,s}_i(t))-B^{n,s}_i(\JT_i^{n,s}(t))\\
&\geq B_i^{n,s}(\JT_i^{n,s}(t)+1)-B^{n,s}_i(\JT_i^{n,s}(t)),
\end{align*}
thus
\begin{align*}
\hat Q^{n,s}_i(\JT_i^{n,s}(t))
&\geq \hat B_i^{n,s}(\JT_i^{n,s}(t)+1)-\hat B_i^{n,s}(\JT_i^{n,s}(t))+\lambda_i\sqrt n.
\end{align*}
The conclusion follows using \eqref{imprel6} and the tightness of the r.v.s
$\sup_s\|\hat Q^{n,s}\|_{T+1}$ and $\sup_s\|\hat{B}^{n,s}\|_{T+2}$, $n\in\N$.

Using \eqref{47}, the tightness of the r.v.s $\sup_s\|\hat Q^{n,s}\|_{T+1}$
and $\sup_s\|\hat B^{n,s}\|_{T+2}$ and the facts \eqref{imprel6} and \eqref{imprel7}, gives that
of $\sup_s\|\wh\WT^{n,s}\|_T$. As a result, $\WT^{n,s}=e^{n,s}$.
Using \eqref{47} again shows that $\gamma^n_i(T)$ of \eqref{49} satisfies
\[
\gamma^n_i(T)\le \sup_s w_{T+2}(\hat B^{n,s},\del)
\]
on the event $\{\sup_s\sup_{t\le T}(\JT^{n,s}_i(t)+\WT^{n,s}_i(t))\le T+2\}\cap
\{\sup_s\WT^{n,s}_i<\del\}$. Since we have just argued that the probability of this
event converges to 1 as $n\to\iy$, the result follows from \eqref{64}.
\end{proof}

Finally we prove our main result.

\noi
\textit{Proof of Theorem \ref{th1}.}
Let $\hat{\Om}^n$ be the event defined by \eqref{30}.
Fix $(i,j)\in\SC$.
Then if
\begin{equation}\label{76}
C_{ij}^{n}(\sigma^n_{ij},\sigma^{n,ij})>C_{ij}^{n}(\bar{\sigma}^n_{ij},\sigma^{n,ij}) +\epsilon\,,
\end{equation}
we have by \eqref{14}, that $\AT^n_{ij}\leq \bar{T}$. Now, there can be two cases.

\noindent \textit{Case 1:} $h_i(\la_i^{-1}Q^{n,0}_i(\AT^n_{ij}-))< r_i$.
Then by \eqref{12}, $\Del^n_i(j)=1$, hence by \eqref{14}, $C^n_{ij}(\sig^n_{ij},\sig^{n,ij})
=h_i(\wh\WT^{n,0}_{ij})$, whereas $C^n_{ij}(\bar\sig^n_{ij},\sig^{n,ij})=r_i$. Thus
$h_i(\widehat{\WT}^{n,0}_{ij})>r_i+\epsilon$, and so
\[
h_i(\widehat{\WT}^{n,0}_{ij})-\epsilon > r_i> h_i\biggl(\frac{\hat{Q}^{n,0}_i(\AT^n_{ij}-)}{\lambda_i}\biggr).
\]
Since $\hat Q^{n,0}_i$ is bounded by $\theta_i+1$, it follows that
\begin{align}\label{45}\nonumber
\sum_{k=1}^N\frac{\gamma^n_k(T)}{\lambda_k}+\frac{1}{\sqrt{n}} &\geq \widehat{\WT}^{n,0}_{i}(\AT^n_{ij})-\frac{\hat{Q}^{n,0}_i(\JT_i^{n,s}(\AT^n_{ij}))}{\lambda_i}+\frac{1}{\sqrt{n}}= \widehat{\WT}^{n,0}_{ij}-\frac{\hat{Q}^{n,0}_i(\AT^n_{ij}-)}{\lambda_i}\\&\ge
\inf\{b-a:h(b)-h(a)>\eps,\,a\in[0,\la_i^{-1}(\theta_i+1)],b\ge0\} >0\,,
\end{align}
by the continuity of $h$.

\noindent \textit{Case 2:} $h_i(\la_i^{-1}\hat{Q}^{n,0}_i(\AT^n_{ij}-))\geq r_i$.
In this case, by \eqref{12} $\Del^n_i(j)=0$, by \eqref{14}, $C^n_{ij}(\sig^n_{ij},\sig^{n,ij})=r_i$
and $C^n_{ij}(\bar\sig^n_{ij},\sig^{n,ij})=h_i(\wh\WT^{n,s}_{ij})$. Hence
$h_i(\widehat{\WT}^{n,s}_{ij})<r_i-\epsilon$, and so
\begin{align*}
h_i(\widehat{\WT}^{n,s}_{ij})+\epsilon < r_i\leq
h_i\biggl(\frac{\hat{Q}^{n,0}_i(\AT^n_{ij}-)}{\lambda_i}\biggr).
\end{align*}
As a result,
\begin{align}\label{46}\nonumber
\sum_{k=1}^N\frac{\gamma^n_k(T)}{\lambda_k} &\geq \frac{\hat{Q}^{n,0}_i(\JT_i^{n,s}(\AT^n_{ij}))}{\lambda_i}-\frac{1}{\sqrt{n}}-\widehat{\WT}^{n,s}_{i}(\AT^n_{ij})=
\frac{\hat{Q}^{n,s}_i(\AT^n_{ij}-)}{\lambda_i}-\widehat{\WT}^{n,s}_{ij}
\\&\ge\inf\{b-a:h(b)-h(a)>\eps,\, b\in[0,\la_i^{-1}(\theta_i+1)],a\ge0\}>0\,,
\end{align}
by the continuity and strict monotonicity of $h$.

Combining \eqref{45} and \eqref{46} shows that if \eqref{76}
holds for {\it some} $(i,j)\in\SC$, then
\[
\sum_{k=1}^N\frac{\gamma^n_k(T)}{\la_k}\ge c>0,
\]
where $c$ is a constant that does not depend on $n$. Using Proposition \ref{Snapshot}
shows that $\PP((\hat{\Om}^n)^c)\to0$ as $n\to\iy$. This completes the proof.
\qed

\footnotesize

\bibliographystyle{is-abbrv}

\bibliography{refs}

\end{document}